\documentclass[letterpaper,11pt]{amsart}

\usepackage[all]{xy}                        %

\CompileMatrices                            

\UseTips                                    

\input xypic
\usepackage[bookmarks=true]{hyperref}       

\usepackage{amssymb,latexsym,amsmath,amscd}
\usepackage{xspace}

\usepackage{graphicx}

\reversemarginpar

\vfuzz2pt 
\hfuzz2pt 


\theoremstyle{plain}
\newtheorem{theorem}{Theorem}[section]
\newtheorem*{theorem*}{Theorem}
\newtheorem{proposition}[theorem]{Proposition}
\newtheorem{corollary}[theorem]{Corollary}
\newtheorem{lemma}[theorem]{Lemma}

\theoremstyle{definition}
\newtheorem{definition}[theorem]{Definition}

\newtheorem{remark}[theorem]{Remark}

\newcommand{\enm}[1]{\ensuremath{#1}}          %
\newcommand{\op}[1]{\operatorname{#1}}
\newcommand{\cal}[1]{\mathcal{#1}}

\newcommand{\CC}{\enm{\mathbb{C}}}

\newcommand{\ZZ}{\enm{\mathbb{Z}}}

\newcommand{\PP}{\enm{\mathbb{P}}}

\newcommand{\Aa}{\enm{\cal{A}}}           

\newcommand{\Dd}{\enm{\cal{D}}}

\newcommand{\Hh}{\enm{\cal{H}}}
\newcommand{\Ii}{\enm{\cal{I}}}

\newcommand{\Mm}{\enm{\cal{M}}}

\newcommand{\Oo}{\enm{\cal{O}}}

\newcommand{\Uu}{\enm{\cal{U}}}

\renewcommand{\phi}{\varphi}
\renewcommand{\theta}{\vartheta}
\renewcommand{\epsilon}{\varepsilon}


\newcommand{\Hom}{\op{Hom}}
\newcommand{\Ext}{\op{Ext}}
\newcommand{\Sym}{\op{Sym}}

\newcommand{\End}{\op{End}}


      %






\renewcommand{\a}{\alpha}

\newcommand{\old}[1]{}


\begin{document}

\title[Jumping Conics on a Smooth Quadric in $\PP_3$]{Jumping Conics \\on a Smooth Quadric in $\PP_3$}

\author{Sukmoon Huh}
\address{Korea Institute for Advanced Study \\
Hoegiro 87, Dongdaemun-gu \\
Seoul 130-722, Korea}
\email{sukmoonh@kias.re.kr}
\keywords{Jumping conics, stable bundle, quadric surface}
\subjclass[msc2000]{Primary: {14D20}; Secondary: {14E05}}

\maketitle

\begin{abstract}
We investigate the jumping conics of stable vector bundles $E$ of rank 2 on a smooth quadric surface $Q$ with the first Chern class $c_1=\Oo_Q(-1,-1)$ with respect to the ample line bundle $\Oo_Q(1,1)$. We show that the set of jumping conics of $E$ is a hypersurface of degree $c_2(E)-1$ in $\PP_3^*$. Using these hypersurfaces, we describe moduli spaces of stable vector bundles of rank 2 on $Q$ in the cases of lower $c_2(E)$.
\end{abstract}

\section{Introduction}
The moduli space of stable sheaves on surfaces has been studied by many people. Especially, over the projective plane, the moduli space of stable sheaves of rank 2 was studied by W.Barth \cite{Barth2} and K.Hulek \cite{Hulek}, using the jumping lines and jumping lines of the second kind. In \cite{Vitter}, this idea was generalized to the jumping conics on the projective plane. In this article, we use the concept of jumping conics on the smooth quadric surface.

Let $Q$ be a smooth quadric in $\PP_3=\PP (V)$, where $V$ is a 4-dimensional vector space over complex numbers $\CC$, and $\Mm(k)$ be the moduli space of stable vector bundles of rank 2 on $Q$ with the Chern classes $c_1=\Oo_Q(-1,-1)$ and $c_2=k$ with respect to the ample line bundle $H=\Oo_Q(1,1)$. $\Mm (k)$ form an open Zariski subset of the projective variety $\overline{\Mm}(k)$ whose points correspond to the semi-stable sheaves on $Q$ with the same numerical invariants. The Zariski tangent space of $\Mm(k)$ at $E$, is naturally isomorphic to $H^1(Q, \End (E))$ and so the dimension of $\Mm (k)$ is equal to $h^1(Q, \End (E))=4k-5$, since $E$ is simple.

Using the Beilinson-type theorem on $Q$ \cite{Buch}, we obtain the following monad for $E\in \Mm(k)$,
$$0\rightarrow \CC^{k-1}\otimes \Oo_Q(-1,-1) \rightarrow \CC^{k} \otimes (\Oo_Q(0,-1)\oplus \Oo_Q(-1,0)) \rightarrow \CC^{k-1}\otimes \Oo_Q \rightarrow 0,$$
with the cohomology sheaf $E$, where the first injective map derives a map
$$\delta : H^1(E(-1,-1))\otimes V^* \rightarrow H^1(E).$$
As in \cite{Barth}, we similarly define $S(E)\subset \PP_3^*$, the set of jumping conics of $E$, and prove that $S(E)$ is a hypersurface in $\PP_3^*$ of degree $k-1$ whose equation is given by $\det \delta(z)=0$, $z\in V^*$, where $\delta(z)$ is a symmetric $(k-1)\times (k-1)$-matrix. We give a criterion for $H\in \PP_3^*$ to be a singular point of $S(E)$ and calculate the exact number of singular points of $S(E)$ when $E$ is a Hulsbergen bundle, i.e. $E$ admits the following exact sequence,
$$0\rightarrow \Oo_Q \rightarrow E(1,1) \rightarrow I_Z(1,1) \rightarrow 0,$$
where $Z$ is a 0-cycle on $Q$ with length $k$ whose support is in general position.

In Section 4, we describe the above results in the cases $c_2\leq 3$ by investigating the map
$$S : \Mm (k) \rightarrow |\Oo_{\PP_3^*}(k-1)|,$$
sending $E$ to $S(E)$. When $c_2=2$, $S(E)$ is a hypersurface in $\PP_3$ and $\Mm(2)$ is isomorphic to $\PP_3\backslash Q$ via $S$, which was already shown in \cite{Huh}. In the case of $c_2=3$, we investigate the surjective map from $\Mm(3)$ to $\PP_3^*$, sending $E$ to the vertex point of the quadric cone $S(E)\subset \PP_3^*$ to give an explicit description of $\Mm (3)$. In fact, the generic fibre of this map over $H\in \PP_3^*$ is isomorphic to the set of smooth conics which are Poncelet related to the smooth conic $H\cap Q$. As a result, we can observe that $S$ is an isomorphism from $\Mm(3)$ to its image and in particular, when $c_2=2,3$, the set of jumping conics, $S(E)$, determines $E$ uniquely.

We would like to thank E. Ballico, F. Catanese, I. Dolgachev and M. Reid for many suggestions and advices.

\section{The Beilinson Theorem and Jumping Conics}
\subsection{The Beilinson Theorem}
Let $V_1$ and $V_2$ be two 2-dimensional vector spaces with the coordinate $[x_{1i}]$ and $[x_{2j}]$, respectively. Let $Q$ be a smooth quadric isomorphic to $\PP(V_1)\times \PP(V_2)$ and then it is embedded into $\PP_3\simeq \PP(V)$ by the Segre map, where $V=V_1\otimes V_2$. Let us denote $f^*\Oo_{\PP_1}(a) \otimes g^*\Oo_{\PP_1}(b)$ by $\Oo_Q(a,b)$ and $E\otimes \Oo_Q(a,b)$ by $E(a,b)$ for coherent sheaves $E$ on $Q$, where $f$ and $g$ are the projections from $Q$ to each factors. Then the canonical line bundle $K_Q$ of $Q$ is $\Oo_Q(-2,-2)$.
\begin{definition}
For a fixed ample line bundle $H$ on $Q$, a torsion free sheaf $E$ of rank $r$ on $Q$ is called \textit{stable} (resp. \textit{semi-stable}) with respect to $H$ if
$$\frac{\chi(F \otimes \Oo_Q(mH))}{r'} < (resp. \leq ) \frac{\chi(E\otimes \Oo_Q(mH))}r,$$
for all non-zero subsheaves $F\subset E$ of rank $r'$.
\end{definition}
Let $\overline{\Mm} (k)$ be the moduli space of semi-stable sheaves of rank 2 on $Q$ with the Chern classes $c_1=\Oo_Q(-1,-1)$ and $c_2=k$ with respect to the ample line bundle $H=\Oo_Q(1,1)$. The existence and the projectivity of $\overline{\Mm}(k)$ is known in \cite{Gieseker} and it has an open Zariski subset $\Mm(k)$ which consists of the stable vector bundles with the given numeric invariants. By the Bogomolov theorem, $\Mm (k)$ is empty if $4k < c_1^2=2$ and in particular, we can consider only the case $k\geq 1$. Note that $E \simeq E^*(-1,-1)$ and by the Riemann-Roch theorem, we have
$$\chi_E(m):=\chi(E(m,m))=2m^2+2m+1-k,$$
for $E\in \overline{\Mm}(k)$.

Using the same trick as in the proof of the Beilinson theorem on the vector bundles over the projective space \cite{OSS}, we can obtain similar statement over $Q$.

\begin{proposition}\cite{Buch}
For any holomorphic bundle $E$ on $Q$, there is a spectral
sequence
$$E_1^{p,q} \Rightarrow E_{\infty}^{p+q}=\left\{
                                           \begin{array}{ll}
                                             E, & \hbox{if $p+q=0$;} \\
                                             0, & \hbox{otherwise,}
                                           \end{array}
                                         \right.$$
with
$$\left\{
    \begin{array}{ll}
      E_1^{p,q}=0, ~~|p+1|>1& \hbox{} \\
      E_1^{0,q}=H^q(E)\otimes \Oo_Q & \hbox{} \\
      E_1^{-2,q}=H^q(E(-1,-1))\otimes \Oo_Q (-1,-1), & \hbox{}
    \end{array}
  \right.
$$
and an exact sequence
$$\cdots \rightarrow H^q(E(0,-1))\otimes \Oo_Q (0,-1) \rightarrow
E_1^{-1,q} \rightarrow H^q(E(-1,0))\otimes \Oo_Q (-1,0) \rightarrow
\cdots.$$
\end{proposition}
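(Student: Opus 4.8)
The plan is to follow the standard derivation of a Beilinson spectral sequence, replacing the single Koszul resolution of the diagonal on projective space used in \cite{OSS} by the external tensor product of the two elementary resolutions coming from the two ruling factors. Write $p_1,p_2\colon Q\times Q\to Q$ for the two projections and let $\Delta\subset Q\times Q$ be the diagonal. On each factor $\PP(V_i)=\PP_1$ the Euler sequence yields the length-one resolution $0\to \Oo_{\PP_1}(-1)\boxtimes \Oo_{\PP_1}(-1)\to \Oo_{\PP_1}\boxtimes \Oo_{\PP_1}\to \Oo_{\Delta_i}\to 0$ of the diagonal $\Delta_i\subset \PP_1\times \PP_1$. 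Since $Q\times Q\cong(\PP(V_1)\times\PP(V_1))\times(\PP(V_2)\times\PP(V_2))$ and $\Oo_\Delta=\Oo_{\Delta_1}\boxtimes\Oo_{\Delta_2}$, the external tensor product of these two resolutions is a locally free resolution $C^\bullet\to\Oo_\Delta$ concentrated in degrees $-2,-1,0$, with
\begin{align*}
C^0 &= \Oo_Q\boxtimes\Oo_Q, \\
C^{-1} &= \big(\Oo_Q(-1,0)\boxtimes\Oo_Q(-1,0)\big)\oplus\big(\Oo_Q(0,-1)\boxtimes\Oo_Q(0,-1)\big), \\
C^{-2} &= \Oo_Q(-1,-1)\boxtimes\Oo_Q(-1,-1).
\end{align*}

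First I would record the tautological identity $E\cong Rp_{2*}(p_1^*E\otimes\Oo_\Delta)$, which holds because $p_2|_\Delta$ is an isomorphism and the complex is concentrated in degree $0$. Substituting the resolution $C^\bullet$ for $\Oo_\Delta$ produces a quasi-isomorphism $E\cong Rp_{2*}(p_1^*E\otimes C^\bullet)$ in the derived category. The hypercohomology spectral sequence of the complex $p_1^*E\otimes C^\bullet$ for the functor $Rp_{2*}$, with $E_1^{p,q}=R^qp_{2*}(p_1^*E\otimes C^p)$, then has abutment $E$ in total degree $0$ and $0$ otherwise, which is exactly the stated $E_\infty$. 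Since $C^\bullet$ vanishes outside $-2\le p\le 0$, we immediately obtain $E_1^{p,q}=0$ for $|p+1|>1$.

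It remains to compute the $E_1$ page. For each rank-one summand $\Oo_Q(a,b)\boxtimes\Oo_Q(c,d)$ of $C^p$, the projection formula together with cohomology and base change for the flat map $p_2$ gives $R^qp_{2*}\big(p_1^*(E(a,b))\otimes p_2^*\Oo_Q(c,d)\big)\cong H^q(Q,E(a,b))\otimes\Oo_Q(c,d)$. Reading off the columns $p=0$ and $p=-2$ yields $E_1^{0,q}=H^q(E)\otimes\Oo_Q$ and $E_1^{-2,q}=H^q(E(-1,-1))\otimes\Oo_Q(-1,-1)$, respectively. For the middle column $p=-1$, the two mixed summands of $C^{-1}$ contribute $H^q(E(-1,0))\otimes\Oo_Q(-1,0)$ and $H^q(E(0,-1))\otimes\Oo_Q(0,-1)$; the double-complex filtration induced by the tensor-product structure of $C^\bullet$ realises $E_1^{-1,q}$ as an extension of these two terms, giving the asserted exact sequence.

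The step demanding the most care is this middle column: one must check that the two mixed terms, which share the same total degree $p=-1$, assemble into $E_1^{-1,q}$ through the differential of the tensor-product complex rather than merely summing, and that the resulting filtration is the displayed one. The remaining inputs — the correctness of the external-tensor-product resolution of $\Oo_\Delta$ and the base-change computation — are routine once the resolution on each $\PP_1$ factor is in hand, and are precisely the points at which the projective-space argument of \cite{OSS} specialises to the quadric.
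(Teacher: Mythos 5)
Your argument is correct and is essentially the paper's own proof: the resolution $C^\bullet$ you build as the external tensor product of the two $\PP_1$-diagonal resolutions is exactly the Koszul complex $0\to\Oo(-1,-1)(-1,-1)'\to\bigoplus\Oo(-i,i-1)(-i,i-1)'\to\Oo\to\Oo_\triangle$ used in the paper, and both proofs then tensor with the pullback of $E$ and take the hypercohomology spectral sequence of the pushforward to one factor (you push along $p_2$ where the paper pushes along $p_1$, which is immaterial by symmetry). Your added remarks on the projection-formula computation of the outer columns and on the middle column being governed by the double-complex structure only make explicit what the paper compresses into ``the standard argument on the spectral sequence.''
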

\begin{proof}
Let $p_1$ and $p_2$ be the projections from $Q\times Q$ to each factors and denote $p_1^*\Oo_Q(a,b)\otimes p_2^*\Oo_Q(c,d)$ by $\Oo (a,b)(c,d)'$. If we let $\triangle$ be the diagonal of $Q\times Q$, we have the following Koszul complex,
\begin{equation}
0\rightarrow \Oo(-1,-1)(-1,-1)' \rightarrow \bigoplus_{i=0}^1 \Oo (-i,1-i)(-i,1-i)' \rightarrow \Oo \rightarrow \Oo_{\triangle}.
\end{equation}
If we tensor it with $p_2^*E$, then we have a locally free resolution of $p_2^*E|_{\triangle}$. If we take higher direct images under $p_1$, we get the assertion by the standard argument on the spectral sequence.
\end{proof}

From the stability condition of $E\in \Mm(k)$, we have $H^0(E(a,b))=0$ whenever $a+b\leq 0$. Hence $E_1^{p,q}=0$ for
$p=-2,-1,0$ and $q=0,2$ and thus the proposition gives us a monad
\begin{equation}
M: 0\rightarrow K_{1,1}\otimes \Oo_Q (-1,-1) \rightarrow E_1^{-1,1}
\rightarrow K_{0,0} \otimes \Oo_Q \rightarrow 0,
\end{equation}
with the cohomology sheaf $E(M)=E$, where $K_{a,b}=H^1(E(-a,-b))$ and $E_1^{-1,1}$ fits into
the following exact sequence,
\begin{equation}
0\rightarrow K_{0,1} \otimes \Oo_Q (0,-1) \rightarrow
E_1^{-1,1} \rightarrow K_{1,0} \otimes \Oo_Q(-1,0) \rightarrow 0.
\end{equation}
Since $H^1(\Oo_Q (1,-1))=0$, this exact sequence splits. Thus we have the following corollary.

\begin{corollary}\label{monad}
Let $E\in \Mm(k)$. Then $E$ becomes the cohomology sheaf of the following monad:
\begin{align*}
M(E)~~:~~0\rightarrow &K_{1,1} \otimes \Oo_Q(-1,-1) \rightarrow \\  &\bigoplus_{i=0}^1 (K_{i,1-i}\otimes \Oo_Q (-i,-1+i)) \rightarrow K_{0,0}\otimes \Oo_Q \rightarrow 0.
\end{align*}
\end{corollary}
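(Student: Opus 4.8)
The plan is to read the monad directly off the Beilinson spectral sequence of the preceding Proposition, once stability has annihilated every $E_1^{p,q}$ outside the row $q=1$, and then to split the middle term. First I would record the needed vanishings. Stability of $E$ gives $H^0(E(a,b))=0$ whenever $a+b\le 0$, which kills the $q=0$ terms $E_1^{0,0}=H^0(E)\otimes\Oo_Q$ and $E_1^{-2,0}=H^0(E(-1,-1))\otimes\Oo_Q(-1,-1)$, and, through the exact sequence defining $E_1^{-1,q}$, also $E_1^{-1,0}$, since its two flanking terms $H^0(E(0,-1))\otimes\Oo_Q(0,-1)$ and $H^0(E(-1,0))\otimes\Oo_Q(-1,0)$ both vanish. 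For the $q=2$ terms I would invoke Serre duality, writing $H^2(E(a,b))\cong H^0(E^*(-a,-b)\otimes K_Q)^*$, and then use the self-duality $E\simeq E^*(-1,-1)$ together with $K_Q=\Oo_Q(-2,-2)$ to reduce each such group to $H^0$ of a twist of $E$ of total degree $\le 0$; these vanish for the same reason, so $E_1^{p,2}=0$ for $p=-2,-1,0$ as well.

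Once only the row $q=1$ survives, no higher differential can act, so $E_2=E_\infty$ and the complex $E_1^{-2,1}\rightarrow E_1^{-1,1}\rightarrow E_1^{0,1}$ is precisely the monad $M$: the vanishing of $E_\infty^{-1}$ and $E_\infty^{1}$ forces the outer maps to be injective and surjective, while $E_\infty^{0}=E$ identifies the middle cohomology with $E$. Here the outer terms are $E_1^{-2,1}=K_{1,1}\otimes\Oo_Q(-1,-1)$ and $E_1^{0,1}=K_{0,0}\otimes\Oo_Q$, and the middle term sits in the short exact sequence
\begin{equation*}
0\rightarrow K_{0,1}\otimes\Oo_Q(0,-1)\rightarrow E_1^{-1,1}\rightarrow K_{1,0}\otimes\Oo_Q(-1,0)\rightarrow 0.
\end{equation*}

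The decisive step is to split this last sequence. I would place its extension class in
\begin{equation*}
\Ext^1\!\big(K_{1,0}\otimes\Oo_Q(-1,0),\,K_{0,1}\otimes\Oo_Q(0,-1)\big)\cong\Hom(K_{1,0},K_{0,1})\otimes H^1(\Oo_Q(1,-1)),
\end{equation*}
and observe that $H^1(\Oo_Q(1,-1))=0$: by the K\"unneth formula it is the sum of $H^0(\Oo_{\PP_1}(1))\otimes H^1(\Oo_{\PP_1}(-1))$ and $H^1(\Oo_{\PP_1}(1))\otimes H^0(\Oo_{\PP_1}(-1))$, and each summand vanishes. Hence the extension is trivial, $E_1^{-1,1}\cong\bigoplus_{i=0}^1 K_{i,1-i}\otimes\Oo_Q(-i,-1+i)$, and substituting this into $M$ yields the asserted monad $M(E)$.

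The work here is bookkeeping rather than anything deep. The one point requiring care is the $q=2$ row: because $E_1^{-1,1}$ is presented only through an extension and not as a single cohomology group, I must deduce the vanishing of $E_1^{-1,2}$ from its two flanking groups $H^2(E(0,-1))$ and $H^2(E(-1,0))$ rather than directly, and the Serre-duality twists must be tracked exactly so that each reduced group indeed has total degree $\le 0$. With those vanishings and the single computation $H^1(\Oo_Q(1,-1))=0$ in hand, the corollary follows formally from the degeneration of the spectral sequence.
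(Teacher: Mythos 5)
Your proposal is correct and follows the same route as the paper: kill the $q=0$ and $q=2$ rows of the Beilinson spectral sequence using the stability vanishing $H^0(E(a,b))=0$ for $a+b\le 0$ (with Serre duality and $E\simeq E^*(-1,-1)$ handling $q=2$, which the paper leaves implicit), read off the monad from the surviving row, and split the middle term via $H^1(\Oo_Q(1,-1))=0$. The only difference is that you spell out the bookkeeping the paper compresses into two sentences.
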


Note that $k_{1,1}=k_{0,0}=k-1$ and $k_{1,0}=k_{0,1}=k$, where $k_{i,j}=\dim K_{a,b}$.

Let us denote by $a$, the first injective map in the monad in the corollary (\ref{monad}). Since $E\simeq E^*(-1,-1)$, the last surjective map is the dual of $a$, twisted by $\Oo_Q(-1,-1)$ and thus the monad $M$ is completely determined by $a$. The monomorphism $a$ corresponds to an element $\a$ in $$K_{1,1}^* \otimes ((K_{0,1}\otimes V_1)\oplus (K_{1,0}\otimes
V_2)),$$
i.e. $\a=(\a_1, \a_2)$, where $\a_i\in \Hom (V_i^*, \Hom (K_{1,1}, K_{i-1,2-i}))$. Since $K_{1,1}^*\simeq K_{0,0}$ and $K_{1,0}^*\simeq K_{0,1}$, we can obtain a map
\begin{equation}
\delta : V_1^* \otimes V_2^*  \rightarrow \Hom (K_{1,1} , K_{0,0}),
\end{equation}
defined by $\delta := \a_2^t \circ \a_1 + \a_1^t \circ a_2$. So $\delta(z)\in K_{0,0}\otimes K_{0,0}$ since $K_{1,1}^*\simeq K_{0,0}$. Again from the self-duality of $E$, i.e. $E\simeq E^*(-1,-1)$, we have $\delta(z)=\delta (z) ^t$. In other words, $\delta(z)$ is an element in $\Sym^2(K_{0,0})$ for all $z$.

\subsection{Jumping Conics}
Let $H$ be a general hyperplane section of $\PP_3$ and then $C_H:=Q\cap H$ is a conic on $H$. Let $E$ be a vector bundle of rank $r$ on $Q$. If we choose an isomorphism $f: \PP_1 \rightarrow C_H$, then due to Grothendieck, we have
$$f^* E|_{C_H} \simeq \Oo_{\PP_1} (a_1) \oplus \cdots \oplus \Oo_{\PP_1}(a_r),$$
where $a_{E,H}:=(a_1, \cdots, a_r) \in \ZZ ^r$ such that $a_1\geq \cdots \geq a_r$. Here, $a_{E,H}$ is called the splitting type of $E|_{C_H}$.

\begin{definition}
 A conic $C_H=Q\cap H$ on $Q$, is called \textit{a jumping conic} of $E$ if the splitting type $a_{E,H}$ of $E|_{C_H}$ is different from the generic splitting type $a_E$. We will denote the set of jumping conics of $E$ by $S(E)\subset \PP_3^*$.
\end{definition}
\begin{remark}
The above definition is valid only for the general hyperplane sections $H$. Later, we give an equivalent definition for the jumping conics for arbitrary case, using the cohomological criterion.
\end{remark}
From the theorem (0.2) in \cite{Maruyama}, we have $a_i-a_{i+1}\leq 2$ for all $i$ since the degree of $Q\subset \PP_3$ is 2. From the following proposition, we know that this upper bound can be sharpened to be 1.

\begin{proposition}\label{GM}
If $E$ is a stable vector bundle on $Q$ of rank $r$, we have
$$a_i-a_{i+1}\leq 1, \text{    for all } i,$$
where $a_E=(a_1, \cdots, a_r)$.
\end{proposition}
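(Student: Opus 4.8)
The plan is to run a Grauert--M\"ulich--type argument adapted to the family of conics on $Q$, in which strict stability together with a normal bundle computation sharpens Maruyama's generic bound from $2$ to $1$. Suppose, for contradiction, that $a_k-a_{k+1}\ge 2$ for some $k$. Let $B=\PP_3^*$ be the space of conics $C_H=Q\cap H$, and let $\Sigma=\{(x,H):x\in C_H\}\subset Q\times B$ be the incidence variety, with projections $p:\Sigma\rightarrow Q$ (a $\PP_2$-bundle) and $q:\Sigma\rightarrow B$ (with general fibre $C_H\simeq\PP_1$). Over the open set $U\subset B$ where $E|_{C_H}$ has the generic splitting type, the relative Harder--Narasimhan filtration of $p^*E$ along $q$ produces a subbundle $\Ff\subset p^*E$ of rank $k$ whose restriction to each fibre $C_H$ is the subbundle $F_H$ spanned by the summands $\Oo_{\PP_1}(a_1),\ldots,\Oo_{\PP_1}(a_k)$; write $\Qq:=p^*E/\Ff$, so that $\Qq|_{C_H}\simeq\bigoplus_{j>k}\Oo_{\PP_1}(a_j)$ has all summands of degree $\le a_{k+1}$.

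Since $p^*E$ carries the canonical flat relative connection $\nabla$ along the fibres of $p$, I would form the second fundamental form of $\Ff$, that is the $\Oo_\Sigma$-linear map $\psi:\Ff\rightarrow\Qq\otimes\Omega^1_{\Sigma/Q}$ obtained by composing $\nabla$ with the projection to $\Qq$. The crux is to restrict $\psi$ to a fibre $C_H$ and to identify $\Omega^1_{\Sigma/Q}|_{C_H}$. The tangent space to a fibre of $p$ at $(x,H)$ is the space of first-order deformations of $H$ fixing $x$, so along $C_H$ one obtains the exact sequence $0\rightarrow T_{\Sigma/Q}|_{C_H}\rightarrow H^0(C_H,N)\otimes\Oo_{C_H}\rightarrow N\rightarrow 0$, where $N=N_{C_H/Q}=\Oo_Q(1,1)|_{C_H}\simeq\Oo_{\PP_1}(2)$ and the last map is evaluation. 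Computing the syzygies of the complete linear system of $\Oo_{\PP_1}(2)$ gives $T_{\Sigma/Q}|_{C_H}\simeq\Oo_{\PP_1}(-1)^{\oplus 2}$, hence $\Omega^1_{\Sigma/Q}|_{C_H}\simeq\Oo_{\PP_1}(1)^{\oplus 2}$.

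I would then observe that $\psi|_{C_H}$ is a map $F_H\rightarrow\Qq|_{C_H}\otimes\Oo_{\PP_1}(1)^{\oplus 2}$; its target has summands of degree $\le a_{k+1}+1\le a_k-1$, while $F_H$ has summands of degree $\ge a_k$, so no nonzero homomorphism exists and $\psi|_{C_H}=0$ for every $H\in U$. As the fibres $C_H$ cover $\Sigma_U:=q^{-1}(U)$, the section $\psi$ vanishes identically there, so $\Ff$ is preserved by $\nabla$. Flatness of $\nabla$ along the rational fibres of $p$ then forces $\Ff=p^*F$ for a subsheaf $F\subset E$ defined over the dense open image of $p$, which I extend to all of $Q$ by taking the reflexive hull.

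Finally I would derive the contradiction from stability. Since $p$ restricts to an isomorphism from each fibre $C_H$ onto the conic $C_H\subset Q$, we get $F|_{C_H}=F_H$, so $c_1(F)\cdot H=\deg F_H=\sum_{j\le k}a_j$ and $\mu_H(F)=\tfrac{1}{k}\sum_{j\le k}a_j$. Because $a_1\ge\cdots\ge a_r$ and the gap $a_k>a_{k+1}$ is strict, the average of the top $k$ exponents strictly exceeds the total average, i.e. $\mu_H(F)>\tfrac{1}{r}\sum_i a_i=\mu_H(E)$, and passing to the reflexive hull only increases the left-hand side. This contradicts the $\mu$-semistability of $E$, which is implied by its stability, completing the proof. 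The delicate step is the normal bundle computation: it is precisely the fact that both summands of $\Omega^1_{\Sigma/Q}|_{C_H}$ have degree $1$ (rather than a single summand of degree $2$) that produces the twist by $\Oo_{\PP_1}(1)$ and hence the sharpened bound $a_i-a_{i+1}\le 1$; with the cruder estimate one would only recover Maruyama's bound of $2$.
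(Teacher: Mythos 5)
Your proof is correct and is essentially the paper's own argument: both run the Grauert--M\"ulich descent on the incidence variety of conics, and both hinge on the same key computation that the relative (co)tangent bundle of $\mathbf{I}\rightarrow Q$ restricts to a conic as $\Oo_{\PP_1}(\mp 1)^{\oplus 2}$, forcing the second fundamental form to vanish when some gap $a_i-a_{i+1}\geq 2$. The only cosmetic differences are that you unwind the Descente-Lemma (which the paper cites from \cite{OSS}) by writing out the second fundamental form explicitly, and you obtain $T_{\mathbf{I}|Q}|_{C_H}\simeq\Oo_{\PP_1}(-1)^{\oplus 2}$ from the evaluation sequence of $N_{C_H/Q}\simeq\Oo_{\PP_1}(2)$ rather than from $\Omega_{\PP_2}(1)|_{C_H}$ --- the same syzygy bundle.
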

\begin{proof}
The proof is analogue of the one for the Grauert-M\"{u}lich theorem in \cite{OSS}. We consider the incidence variety $\mathbf{I}=\{(x,H)\in Q \times \PP_3^*~|~ x\in C_H\}$ with the projections $\pi_1$ and $\pi_2$ to each factors. Suppose that $i$ is the first index such that $a_i-a_{i+1}\geq 2$. Moreover, we can assume that $a_i=0$. Now, let us consider the natural map
\begin{equation}
\pi_2^* {\pi_2}_* \pi_1^* E \rightarrow \pi_1^* E
\end{equation}
and $E_1$ be the image of this map. Then $E_1$ is a subsheaf of $\pi_1^* (E)$ on $\mathbf{I}$ of rank $i$ such that
$$f^*E_1|_{\pi_2^{-1}(H)}\simeq \Oo_{\PP_1}(a_1)\oplus \cdots \Oo_{\PP_1}(a_i),$$
for a general $H\in \PP_3^*$ and an isomorphism $f: \PP_1 \rightarrow C_H$. Then the quotient sheaf $E_2:=\pi_1^*(E)/E_1$ is of rank $r-i$ with
$$f^*E_2|_{\pi_2^{-1}(H)}\simeq \Oo_{\PP_1}(a_{i+1})\oplus \cdots \Oo_{\PP_1}(a_r),$$
for a general $H\in \PP_3^*$. From the following lemma, the pull-back of the relative tangent bundle $T_{\mathbf{I}|Q}$ to $\PP_1$, is $\Oo_{\PP_1}(-1)^{\oplus 2}$. Hence, the restriction of the sheaf $\Hh om(T_{\mathbf{I}|Q}, \Hh om(E_1, E_2))$ to $C_H$ is isomorphic to the direct sum of $\Oo_{\PP_1}(a_{j_1}-a_{j_2}+1)^{\oplus 2}$ where $j_1\geq i+1$ and $j_2\leq i$. In particular, we have
$$\Hom (T_{\mathbf{I}|Q}, \Hh om(E_1, E_2))=0.$$
By the Descente-Lemma \cite{OSS}, there exists a subsheaf of $E'$ of $E$ on $Q$ such that $\pi_1^*E'=E_1$ and it would make the contradiction to the stability of $E$.
\end{proof}
\begin{lemma}
Let $f: \PP_1 \rightarrow C_H$ be an isomorphism. Then we have am isomorphism
$$f^* T_{\mathbf{I}|Q}|_{C_H}\simeq \Oo_{\PP_1}(-1)^{\oplus 2}.$$
\end{lemma}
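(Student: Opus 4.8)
The plan is to exhibit $\mathbf{I}$ as a projective bundle over $Q$ and then identify the restriction of its relative tangent bundle along the section cut out by $H$. Writing $\PP_3^* = \PP(V^*)$, a point $x\in Q$ lies on $C_H$ for $H=[\xi_0]$ exactly when $\xi_0(x)=0$, so the fibre of $\pi_1:\mathbf{I}\rightarrow Q$ over $x$ is the plane $\PP(x^\perp)$ of hyperplanes through $x$, where $x^\perp\subset V^*$ is the annihilator. First I would make this uniform: the evaluation map $V^*\otimes \Oo_Q \rightarrow \Oo_Q(1,1)$ (using $H^0(\Oo_Q(1,1))=V_1^*\otimes V_2^*=V^*$) is surjective with kernel a rank-$3$ bundle $\Ee$ whose fibre at $x$ is $x^\perp$. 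Thus $\mathbf{I}\simeq \PP(\Ee)$ over $Q$ as a $\PP_2$-bundle, and $T_{\mathbf{I}|Q}$ is its relative tangent bundle.

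Next I would compute this bundle fibrewise along the copy of $C_H$ sitting in $\mathbf{I}$ as $\pi_2^{-1}(H)$, i.e. the section $x\mapsto (x,[\xi_0])$. The tangent space of the fibre $\PP(x^\perp)$ at the point $[\xi_0]$ is $\Hom(\langle\xi_0\rangle,\,x^\perp/\langle\xi_0\rangle)$. Since $\langle\xi_0\rangle$ is a fixed line, the constant, nowhere-vanishing section $\xi_0$ of $\Ee|_{C_H}$ spans a trivial sub-line-bundle, and tensoring by the trivial line $\langle\xi_0\rangle^*$ is harmless, so
$$ T_{\mathbf{I}|Q}|_{\pi_2^{-1}(H)} \simeq \Ff, \qquad \Ff := (\Ee|_{C_H})/(\langle\xi_0\rangle\otimes\Oo_{C_H}). $$
The whole problem thereby reduces to computing the rank-$2$ bundle $\Ff$ on $C_H\simeq \PP_1$.

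To do that I would restrict the defining sequence $0\rightarrow \Ee\rightarrow V^*\otimes\Oo_Q\rightarrow \Oo_Q(1,1)\rightarrow 0$ to $C_H$, using $\Oo_Q(1,1)|_{C_H}\simeq \Oo_{\PP_1}(2)$, and push $\xi_0$ through. The key point is that $\xi_0$ restricts to the zero section on $C_H$, because $C_H\subset H=\{\xi_0=0\}$; dividing by $\langle\xi_0\rangle$ then yields
$$ 0\rightarrow \Ff\rightarrow W\otimes\Oo_{C_H}\rightarrow \Oo_{\PP_1}(2)\rightarrow 0, \qquad W:=V^*/\langle\xi_0\rangle. $$
Because a smooth conic spans its plane, the only linear forms vanishing on $C_H$ are the multiples of $\xi_0$, so restriction identifies $W$ with $H^0(\Oo_{\PP_1}(2))$ and the displayed map is the tautological evaluation of the complete system $|\Oo_{\PP_1}(2)|$. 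Finally I would pin down $\Ff$ numerically: it is a rank-$2$ bundle of degree $-2$ with $H^0(\Ff)=0$ (the evaluation is an isomorphism on global sections), and on $\PP_1$ these two facts force $\Ff\simeq \Oo_{\PP_1}(-1)^{\oplus 2}$; pulling back along $f$ gives the claim.

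Each step is short, and the only place demanding genuine care is the first reduction, namely matching the abstract relative tangent bundle of $\PP(\Ee)$ with the concrete bundle $\Ff$ along the section. There one must check that $\xi_0$ spans an honest sub-\emph{bundle} (not merely a subsheaf) of $\Ee|_{C_H}$ and that the auxiliary twist by $\langle\xi_0\rangle^*$ is trivial; once this identification is secured, the cohomological splitting argument on $\PP_1$ is routine.
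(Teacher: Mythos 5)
Your proof is correct, but it reaches the answer by a genuinely different route than the paper. The paper works with the full point--hyperplane incidence variety $\mathbf{I}'\simeq\PP(T^*_{\PP_3})\subset\PP_3\times\PP_3^*$, uses its universal sequence to identify the relative tangent bundle along $\pi_2^{-1}(H)$ with $N_{H|\PP_3}\otimes\Omega_H$, and then pins down the splitting type of $\Omega_H\simeq\Omega_{\PP_2}$ on the conic by showing $h^0(\Omega_{\PP_2}(1)|_{C_H})=0$ via the Bott formula; twisting $f^*\Omega_H\simeq\Oo_{\PP_1}(-3)^{\oplus 2}$ by $f^*N_{H|\PP_3}\simeq\Oo_{\PP_1}(2)$ then gives $\Oo_{\PP_1}(-1)^{\oplus 2}$. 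You instead present $\mathbf{I}$ directly as $\PP(\Ee)$ over $Q$, where $\Ee$ is the kernel of the evaluation $V^*\otimes\Oo_Q\to\Oo_Q(1,1)$, identify the restriction of the relative tangent bundle along the section $x\mapsto(x,[\xi_0])$ with $\Ff=(\Ee|_{C_H})/\langle\xi_0\rangle$, and reduce to the kernel of the evaluation map of the complete system $|\Oo_{\PP_1}(2)|$. The two arguments are secretly computing the same object at the end --- restricting the Euler sequence of $H\simeq\PP_2$ to $C_H$ exhibits $\Omega_H(1)|_{C_H}$ as exactly your kernel bundle $\Ff$, and both close with the same ``rank $2$, degree $-2$, no global sections on $\PP_1$'' step --- but your version never leaves $Q$, avoids the detour through $\PP(T^*_{\PP_3})$ and the appeal to Bott vanishing, and makes the relevant sub-line-bundle completely explicit (the constant section $\xi_0$ lies in $\Ee|_{C_H}$ precisely because $C_H\subset H$, and is nowhere zero there). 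The one point you rightly flag --- that $\xi_0$ spans an honest trivial subbundle, so the twist by $\langle\xi_0\rangle^*$ in $\Hom(\langle\xi_0\rangle,\Ee/\langle\xi_0\rangle)$ is harmless --- is handled correctly, as is the identification $V^*/\langle\xi_0\rangle\simeq H^0(\Oo_{\PP_1}(2))$, which uses that a smooth conic spans its plane.
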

\begin{proof}
Let $\mathbf{I}'$ be the incidence variety in $\PP_3 \times \PP_3^*$, i.e. $\mathbf{I}'\simeq \PP(T^*_{\PP_3})$. Then we have the universal exact sequence of $\PP(T^*_{\PP_3})$,
$$0\rightarrow M \rightarrow \pi_1^* T^*_{\PP_3} \rightarrow N \rightarrow 0,$$
where $M$ and $N$ are vector bundles on $\mathbf{I}'$ of rank 1 and 2, respectively. If we restrict the universal sequence to $\pi_2^{-1}(\{H\})$, then we obtain
$$0\rightarrow N_{H|\PP_3}^* \rightarrow T_{\PP_3}^*|_H \rightarrow T_H^* \rightarrow 0,$$
where $N_{H|\PP_3}\simeq \Oo_H(1)$ is the normal bundle of $H$ in $\PP_3$. Note that
$$T_{\mathbf{I}'|\PP_3}|_H \simeq \Hh om(M, N)|_H\simeq N_{H|\PP_3}\otimes T_H^*.$$
Since $f^*N_{H|\PP_3}$ is isomorphic to $\Oo_{\PP_1}(2)$, it is enough to prove that
$$f^*T_H^* \simeq f^* \Omega_H \simeq\Oo_{\PP_1}(-3)^{\oplus 2}$$
since $T_{\mathbf{I}|\PP_3}$ is the restriction of $T_{\mathbf{I}'|\PP_3}$ to $\mathbf{I}$.
If we tensor the following exact sequence,
$$0\rightarrow \Oo_{\PP_2}(-2) \rightarrow \Oo_{\PP_2}\rightarrow \Oo_{C_H} \rightarrow 0,$$
with $\Omega_{\PP_2}(1)$ (recall that $H\simeq \PP_2$ and $C_H$ is the image conic of $\PP_1$ by $f$) and take the long exact sequence of cohomology, then we obtain $H^0(\Omega_{\PP_2}(1)|_{C_H})=0$ from the Bott theorem \cite{OSS}. Thus we have
$$h^0(f^*(\Omega_{\PP_2}(1)))=h^0(\Omega_{\PP_2}(1)|_{C_H})=0.$$
Since $c_1(f^*(\Omega_{\PP_2}(1)))=-2$, the only possibility is that $f^*(\Omega_{\PP_2}(1))\simeq \Oo_{\PP_1}(-1)^{\oplus 2}$ and so $f^*T_H^*\simeq \Oo_{\PP_1}(-3)^{\oplus 2}$.
\end{proof}
\begin{remark}
From the theorem 1 in \cite{Vitter}, we know that for a semistable vector bundle of rank 2 on $\PP_2$ and a general smooth conic $f: \PP_1 \hookrightarrow \PP_2$, we have
$$f^* E \simeq \left\{
                 \begin{array}{ll}
                   \Oo_{\PP_1}^{\oplus 2}, & \hbox{if $c_1(E)=0$;} \\
                   \Oo_{\PP_1}(-1)^{\oplus 2}, & \hbox{if $c_1(E)=-1$.}
                 \end{array}
               \right.$$
Since $T_{\PP_2}$ is semistable, we can also obtain $f^*T_{\PP_2}^* \simeq \Oo_{\PP_1}(-3)^{\oplus 2}$ for a general conic. In fact, this is true for all conics and $T_{\PP_2}$ is the only vector bundle of rank 2 on $\PP_2$ up to twists with the same splitting type over all smooth conics except a direct sum of two line bundles \cite{IM}.
\end{remark}

Let us assume that $E\in \Mm(k)$ be a stable vector bundle on $Q$. As an immediate consequence of the above proposition, we get the following corollary.

\begin{corollary}
For a general conic $C_H$ on $Q$, we have
$$E|_{C_H}\simeq \Oo_{C_H}(-p)\oplus \Oo_{C_H}(-p),$$
where $p$ is a point on $C_H$.
\end{corollary}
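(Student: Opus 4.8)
The plan is to pin down the two integers in the generic splitting type by combining the Grauert--M\"ulich-type bound from Proposition \ref{GM} with a degree computation and a parity observation. Write $a_E=(a_1,a_2)$ with $a_1\geq a_2$ for the generic splitting type of the rank $2$ bundle $E$, so that for a general (hence smooth) conic $C_H=Q\cap H$ and an isomorphism $f:\PP_1\rightarrow C_H$ we have $f^*E|_{C_H}\simeq \Oo_{\PP_1}(a_1)\oplus\Oo_{\PP_1}(a_2)$. Since $E$ is stable, Proposition \ref{GM} applied with $r=2$ gives immediately $a_1-a_2\leq 1$, so that the only possibilities are $a_1-a_2\in\{0,1\}$. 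It therefore suffices to compute the sum $a_1+a_2$ and then to rule out the odd difference.

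Next I would compute $a_1+a_2=\deg f^*E$ by identifying it with the degree of the restriction $E|_{C_H}$. Since $f$ is an isomorphism, $\deg f^*E=\deg_{C_H}\bigl(c_1(E)|_{C_H}\bigr)$, and for a curve in the surface $Q$ this equals the intersection number $c_1(E)\cdot [C_H]$. The hyperplane section $C_H$ lies in the class $\Oo_Q(1,1)$, while $c_1(E)=\Oo_Q(-1,-1)$; using the intersection form on $Q\simeq \PP(V_1)\times\PP(V_2)$, in which $\Oo_Q(a,b)\cdot\Oo_Q(c,d)=ad+bc$, one gets
$$\deg f^*E=\Oo_Q(-1,-1)\cdot\Oo_Q(1,1)=(-1)(1)+(-1)(1)=-2.$$
Equivalently, $\Oo_Q(1,1)|_{C_H}$ is the hyperplane bundle of the conic, of degree $2$, so $f^*\bigl(\Oo_Q(-1,-1)|_{C_H}\bigr)\simeq\Oo_{\PP_1}(-2)$; either way $a_1+a_2=-2$.

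Now the parity argument finishes the proof: since $a_1+a_2=-2$ is even, $a_1-a_2$ has the same parity and must also be even, which together with $a_1-a_2\in\{0,1\}$ forces $a_1-a_2=0$ and hence $a_1=a_2=-1$. Thus $f^*E|_{C_H}\simeq\Oo_{\PP_1}(-1)^{\oplus 2}$; transporting back along the isomorphism $f$ and noting that a degree $-1$ line bundle on $C_H\simeq\PP_1$ is exactly $\Oo_{C_H}(-p)$ for a point $p\in C_H$, we obtain $E|_{C_H}\simeq\Oo_{C_H}(-p)\oplus\Oo_{C_H}(-p)$. There is no serious obstacle here: the only real input is Proposition \ref{GM}, and the one point that deserves care is the interplay of the sharpened bound $a_1-a_2\leq 1$ with the parity of the even total degree $-2$, which is what excludes the a priori possible balanced-off splitting $(0,-2)$ allowed by Maruyama's cruder estimate.
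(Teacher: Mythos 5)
Your proof is correct and is exactly the argument the paper intends: the corollary is stated there as an ``immediate consequence'' of Proposition \ref{GM}, and the omitted details are precisely your degree computation $a_1+a_2=c_1(E)\cdot\Oo_Q(1,1)=-2$ together with the parity observation that forces $a_1=a_2=-1$. Nothing is missing.
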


In particular, the jumping conics of $E$ can be characterized by
\begin{equation}
h^0(E|_{C_H})\not= 0,
\end{equation}
and we will use this cohomological criterion as the definition of the jumping conics of $E$.

 We consider the exact sequence,
$$0\rightarrow E(-1,-1) \rightarrow E \rightarrow E|_{C_H} \rightarrow 0,$$
to derive the following long exact sequence,
\begin{equation}
0\rightarrow H^0(E|_{C_H}) \rightarrow H^1(E(-1,-1)) \rightarrow H^1(E),
\end{equation}
where the last map is given by $\delta(z)= \a_2^t \otimes \a_1 + \a_1^t \otimes \a_2$, where $z$ is the coordinates determining the hyperplane section $H$. Hence $C_H$ is a jumping conic if and only if $\det (\delta)=0$. Note that $\det(\delta)$ is a homogeneous polynomial of degree $c_2(E)-1$ with the coordinates of $V_1^* \otimes V_2^*$. This determinant does not vanish identically due to the proposition (\ref{GM}), and so we obtain that $S(E)$ is a hypersurface of degree $c_2(E)-1$ in $\PP_3^*$. From the fact that $\delta(z)=\delta (z) ^t$, we obtain the following statement.

\begin{theorem}\label{main}
$S(E)$ is a symmetric determinantal hypersurface of degree $c_2(E)-1$ in $\PP_3^*$.
\end{theorem}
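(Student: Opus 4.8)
The statement is essentially a synthesis of the constructions made above, so the plan is to assemble them and to isolate the one geometric input that guarantees the determinant is not identically zero. First I would fix the cohomological description of $S(E)$: by the criterion $h^0(E|_{C_H})\neq 0$ a conic $C_H$ is jumping exactly when the restriction acquires a section, and the long exact sequence associated to $0\to E(-1,-1)\to E\to E|_{C_H}\to 0$ identifies $H^0(E|_{C_H})$ with $\ker\delta(z)$, where $z\in V_1^*\otimes V_2^*$ is the coordinate of the hyperplane $H$. Since $k_{1,1}=k_{0,0}=k-1$, the map $\delta(z)\colon K_{1,1}\to K_{0,0}$ is a square $(k-1)\times(k-1)$ matrix, so $\ker\delta(z)\neq 0$ is equivalent to $\det\delta(z)=0$; thus $S(E)=\{\,z\mid \det\delta(z)=0\,\}\subset\PP_3^*$ as a set.

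Next I would record the two structural features of this determinant. Because $\delta\colon V_1^*\otimes V_2^*\to\Hom(K_{1,1},K_{0,0})$ is linear, each entry of $\delta(z)$ is a linear form in the coordinates of $z$, and hence $\det\delta(z)$ is homogeneous of degree $k-1$. Using the self-duality $E\simeq E^*(-1,-1)$ and the resulting isomorphism $K_{1,1}^*\simeq K_{0,0}$, the element $\delta(z)$ lies in $\Sym^2(K_{0,0})$, as already observed; concretely, after fixing a basis of $K_{0,0}$ this says that $\delta(z)$ is represented by a symmetric matrix whose entries are linear forms. Therefore $\det\delta$ is the determinant of a symmetric $(k-1)\times(k-1)$ matrix of linear forms, which is exactly the defining datum of a symmetric determinantal hypersurface of degree $k-1$.

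The only step with genuine content — and the one I expect to be the crux — is to check that $\det\delta\not\equiv 0$, since otherwise the equation $\det\delta(z)=0$ would hold on all of $\PP_3^*$ and $S(E)$ would fail to be a hypersurface. For this I would invoke Proposition \ref{GM} and its corollary: for a general conic $C_H$ the restriction is balanced, $E|_{C_H}\simeq\Oo_{C_H}(-p)\oplus\Oo_{C_H}(-p)$, and this bundle has no nonzero global sections. Through the identification above this forces $\ker\delta(z)=0$ for general $z$, so the generic value of the square matrix $\delta(z)$ is invertible and $\det\delta(z)\neq 0$. Consequently $\det\delta$ is a nonzero homogeneous form of degree $k-1=c_2(E)-1$, its vanishing locus is a genuine hypersurface, and together with the symmetry established above this shows that $S(E)$ is a symmetric determinantal hypersurface of degree $c_2(E)-1$, as claimed.
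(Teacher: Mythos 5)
Your proposal is correct and follows essentially the same route as the paper: the long exact sequence of $0\to E(-1,-1)\to E\to E|_{C_H}\to 0$ identifying $H^0(E|_{C_H})$ with $\ker\delta(z)$, the degree count from linearity of $\delta$, the symmetry from $E\simeq E^*(-1,-1)$, and the appeal to Proposition \ref{GM} (via the generic balanced splitting) to see that $\det\delta\not\equiv 0$. The only difference is expository: you spell out why the Grauert--M\"ulich-type statement forces generic invertibility of $\delta(z)$, which the paper leaves implicit.
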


The natural question on $S(E)$ is the smoothness and the next proposition will give an answer to this question.
\begin{proposition}
If $h^0(E|_{C_H})\geq 2$, then $H\in \PP_3^*$ is a singular point of $S(E)$.
\end{proposition}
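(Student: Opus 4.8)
The statement to prove is that if $h^0(E|_{C_H}) \geq 2$, then $H$ is a singular point of the determinantal hypersurface $S(E) = \{\det \delta(z) = 0\}$. Let me think carefully about what's really going on here.

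We have the map $\delta : V_1^* \otimes V_2^* \to \Hom(K_{1,1}, K_{0,0})$, and for each $z$ corresponding to a hyperplane $H$, $\delta(z)$ is a symmetric $(k-1)\times(k-1)$ matrix. The hypersurface $S(E)$ is cut out by $\det\delta(z) = 0$. The long exact sequence
$$0 \to H^0(E|_{C_H}) \to H^1(E(-1,-1)) \xrightarrow{\delta(z)} H^1(E)$$
identifies $H^0(E|_{C_H}) = \ker \delta(z)$. So the hypothesis $h^0(E|_{C_H}) \geq 2$ says precisely that $\dim \ker \delta(z) \geq 2$, i.e. $\operatorname{rank} \delta(z) \leq k-3$, so the corank of the symmetric matrix $\delta(z)$ at this point is at least $2$.

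The plan is the classical fact about symmetric determinantal loci: if a symmetric matrix has corank $\geq 2$ at a point, that point is singular on the determinantal hypersurface. Concretely, I would compute the partial derivatives of $f(z) := \det \delta(z)$ and show they all vanish at our point $z_0$. The key tool is the standard formula
$$\frac{\partial f}{\partial z_\ell}(z_0) = \operatorname{tr}\!\left(\operatorname{adj}(\delta(z_0)) \cdot \frac{\partial \delta}{\partial z_\ell}(z_0)\right),$$
where $\operatorname{adj}$ is the adjugate (classical adjoint). Since $\delta$ is linear in $z$, each $\partial \delta/\partial z_\ell$ is just a constant symmetric matrix, so there is nothing delicate in the derivatives of $\delta$ itself. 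The whole point reduces to the behavior of $\operatorname{adj}(\delta(z_0))$.

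\begin{proof}[Proof sketch I would carry out]
First I would invoke the long exact sequence to identify $H^0(E|_{C_H}) \simeq \ker\delta(z_0)$, so that the hypothesis $h^0(E|_{C_H}) \geq 2$ becomes $\operatorname{corank}\delta(z_0) \geq 2$, equivalently $\operatorname{rank}\delta(z_0) \leq (k-1) - 2 = k-3$. Next I would recall the adjugate: the entries of $\operatorname{adj}(A)$ are (up to sign) the $(k-2)\times(k-2)$ minors of $A$. A standard rank bound says that if $\operatorname{rank}A \leq (k-1)-2$, then every $(k-2)\times(k-2)$ minor of $A$ vanishes, hence $\operatorname{adj}(\delta(z_0)) = 0$. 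Then, using that $\delta$ is linear so that $\det\delta(z)$ is a homogeneous polynomial of degree $k-1$ in $z$, I would differentiate via the formula $\partial_{z_\ell}\det\delta = \operatorname{tr}(\operatorname{adj}(\delta)\,\partial_{z_\ell}\delta)$ and conclude that every first partial derivative of $\det\delta$ vanishes at $z_0$. Since $z_0$ lies on $S(E)$ (because $\det\delta(z_0)=0$) and all first partials vanish there, $H$ is a singular point of $S(E)$.
\end{proof}

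The main obstacle, or rather the one point that needs genuine care rather than being a formula, is the rank argument showing $\operatorname{adj}(\delta(z_0)) = 0$: one must be sure the corank is at least $2$ (not merely $1$), since corank exactly $1$ gives a nonzero adjugate (of rank $1$) and need not force singularity. This is exactly where the hypothesis $h^0 \geq 2$ (rather than $h^0 \geq 1$) is used, and it is the crux of the statement. Everything else — the identification of the kernel via the cohomology sequence, the linearity of $\delta$, and the trace formula for the derivative of a determinant — is routine once this is in place.
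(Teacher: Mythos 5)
Your proof is correct, but it takes a different route from the paper's. The paper does not differentiate the determinant at all: it invokes the general theory of (symmetric) determinantal varieties from Harris, considering the map $\phi:\PP_3^*\to M$ into the space of $(k-1)\times(k-1)$ matrices induced by $\delta$, and uses the fact that the tangent space to the determinantal hypersurface $M_1=\{\det=0\}$ at any point of corank $\geq 2$ is the whole ambient space; pulling back along $d\phi$ then shows the tangent space to $S(E)$ at $H$ is all of $\PP_3^*$, so $H$ is singular. Your argument instead proves that underlying fact by hand: identify $h^0(E|_{C_H})$ with $\ker\delta(z_0)$, note that corank $\geq 2$ kills every $(k-2)\times(k-2)$ minor and hence the adjugate, and conclude via Jacobi's formula $\partial_{z_\ell}\det\delta=\op{tr}\bigl(\op{adj}(\delta)\,\partial_{z_\ell}\delta\bigr)$ that all first partials vanish (and, by Euler's relation, that $z_0$ lies on $S(E)$ to begin with). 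What you gain is a self-contained, elementary computation that does not even use the symmetry of $\delta(z)$ and sidesteps the tangent-space bookkeeping (which in the paper's version contains what appear to be typos, e.g.\ $T_pS_2$ for $T_pS_1$). What the paper's framing buys is brevity and the extra structural information that the full corank stratification $S_i=\phi^{-1}(M_i)$ controls the singularities of $S(E)$, which it uses in the subsequent remark relating the splitting type of $E|_{C_H}$ to the corank. You correctly isolate the crux — that corank exactly $1$ would leave a rank-one adjugate and prove nothing — so there is no gap.
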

\begin{proof}
The statement is clear from the theory on the singular locus of symmetric determinantal varieties \cite{Harris}. Indeed, let $M=M_0$ denote the projective space $\PP_N$ of $(k-1)\times (k-1)$ matrices up to scalars and $M_i$ be the locus of matrices of corank $i$ or more. Let us consider a map $\phi : \PP_3^* \rightarrow M$, determined naturally by $\delta$. If we let $S_i$ be the preimage of $M_i$ via $\phi$, then we have
\begin{align*}
T_p S_2 &= d\phi ^{-1} (T_q \phi(S_2))\\
        &= d\phi^{-1} (T_q M_2\cap T_q\phi (\PP_3^*))\\
        &= d \phi^{-1} (M \cap T_q \phi (\PP_3^*))~~~~~\text{,since $T_qM_2=M$ \cite{Harris}}\\
        &= d \phi^{-1}T_q \phi (\PP_3^*) = \PP_3^*,
\end{align*}
where $q=\phi(p)$ and $p\in S_2$. In particular, $S_2$ is the singular locus of $S_1=S(E)$.
\end{proof}
\begin{remark}
Let $f: \PP_1 \rightarrow C_H\subset Q$ be a smooth conic on $Q$ and assume that we have
$$f^*E|_{C_H} \simeq \Oo_{\PP_1}(-1-i) \oplus \Oo_{\PP_1}(-1+i), $$
where $i$ is a nonnegative integer. Note that $k=h^0(E|_{C_H})=\text{corank}(\delta (z))$, where $z$ is the coordinates of $H$. If $i\geq 2$, then $H\in \PP_3^*$ is a singular point of $S(E)$.
\end{remark}

Now for later use, let us define a sheaf supported on $S(E)$. As in \cite{Barth2}, we can see that $S(E)$ is the support of the $\Oo_{\PP_3^*}$-sheaf $\theta_E(1)$ defined by the following exact sequence,
\begin{equation}\label{te}
0\rightarrow K_{1,1}\otimes \Oo_{\PP_3^*}(-1) \rightarrow K_{0,0}\otimes \Oo_{\PP_3^*} \rightarrow \theta_E(1) \rightarrow 0.
\end{equation}
The first injective map is composed of
$$K_{1,1}\otimes \Oo_{\PP_3^*}(-1) \rightarrow K_{1,1}\otimes (V_1^*\otimes V_2^*) \otimes \Oo_{\PP_3^*} \rightarrow K_{0,0} \otimes \Oo_{\PP_3^*},$$
where the first map is from the Euler sequence over $\PP_3^*$ and the second map is from the map $\delta$. So $\theta_E$ is an $\Oo_{S(E)}$-sheaf.

From the incidence variety $\mathbf{I}\subset Q \times \PP_3^*$, we obtain
$$0\rightarrow \pi_1^*\Oo_Q(-1,-1) \otimes \pi_2^* \Oo_{\PP_3^*}(-1) \rightarrow \Oo_{Q\times \PP_3^*} \rightarrow \Oo_{\mathbf{I}} \rightarrow 0.$$
If we tensor it with $\pi_1^*E$ and take the direct image of it, we obtain,
$$0\rightarrow K_{1,1}\otimes \Oo_{\PP_3^*}(-1) \rightarrow K_{0,0}\otimes \Oo_{\PP_3^*} \rightarrow R^1{\pi_2}_*\pi_1^*E \rightarrow 0.$$
Since this exact sequence coincide with the sequence (\ref{te}), we have
\begin{lemma}
$\theta_E(1) \simeq R^1{\pi_2}_*\pi_1^*E.$
\end{lemma}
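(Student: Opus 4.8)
The plan is to compute the higher direct images $R^q{\pi_2}_*$ of the structure sequence of $\mathbf{I}$ twisted by $\pi_1^*E$, and then match the resulting short exact sequence term by term with the defining sequence (\ref{te}) of $\theta_E(1)$. First I would tensor the structure sequence of $\mathbf{I}$ with the locally free sheaf $\pi_1^*E$ to obtain
$$0\rightarrow \pi_1^*E(-1,-1)\otimes \pi_2^*\Oo_{\PP_3^*}(-1) \rightarrow \pi_1^*E \rightarrow \pi_1^*E|_{\mathbf{I}} \rightarrow 0$$
on $Q\times\PP_3^*$ and take the long exact sequence of $R^q{\pi_2}_*$. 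By flat base change the two terms pulled back from $Q$ have direct images $R^q{\pi_2}_*(\pi_1^*E)=H^q(E)\otimes\Oo_{\PP_3^*}$ and, via the projection formula, $R^q{\pi_2}_*(\pi_1^*E(-1,-1)\otimes\pi_2^*\Oo_{\PP_3^*}(-1))=H^q(E(-1,-1))\otimes\Oo_{\PP_3^*}(-1)$.

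Next I would record the vanishings that collapse this long exact sequence. Stability of $E$ gives $H^0(E)=H^0(E(-1,-1))=0$, and Serre duality together with $E\simeq E^*(-1,-1)$ and $K_Q=\Oo_Q(-2,-2)$ gives $H^2(E)=H^0(E(-1,-1))^*=0$ and $H^2(E(-1,-1))=H^0(E)^*=0$. Hence only the degree-one direct images survive on the pulled-back terms, namely $K_{1,1}\otimes\Oo_{\PP_3^*}(-1)$ and $K_{0,0}\otimes\Oo_{\PP_3^*}$, and the long exact sequence reduces to
$$0\rightarrow {\pi_2}_*\pi_1^*E \rightarrow K_{1,1}\otimes\Oo_{\PP_3^*}(-1) \xrightarrow{\ d\ } K_{0,0}\otimes\Oo_{\PP_3^*} \rightarrow R^1{\pi_2}_*\pi_1^*E \rightarrow 0.$$
To kill the left-hand term I would invoke Proposition \ref{GM}: the generic conic is not jumping, so $d$ is an isomorphism over a dense open subset of $\PP_3^*$; being generically injective with locally free source, $d$ is injective as a morphism of sheaves, whence ${\pi_2}_*\pi_1^*E=\ker d=0$.

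It remains to identify the connecting map $d$ with the injective map defining $\theta_E(1)$ in (\ref{te}), and I expect this to be the main obstacle, since it is the one genuinely non-formal point. Fibrewise over $H$ the map $d$ is the connecting homomorphism $H^1(E(-1,-1))\xrightarrow{\ \cdot z\ } H^1(E)$ induced by multiplication by the section $z\in H^0(\Oo_Q(1,1))$ cutting out the conic $C_H$, which is exactly the map denoted $\delta(z)$ in the Jumping Conics subsection. In the family, the section cutting out $\mathbf{I}\subset Q\times\PP_3^*$ corresponds under the projection formula to the tautological inclusion $\Oo_{\PP_3^*}(-1)\hookrightarrow (V_1^*\otimes V_2^*)\otimes\Oo_{\PP_3^*}$ of the Euler sequence, so that $d$ factors as
$$K_{1,1}\otimes\Oo_{\PP_3^*}(-1) \rightarrow K_{1,1}\otimes(V_1^*\otimes V_2^*)\otimes\Oo_{\PP_3^*} \xrightarrow{\ \delta\ } K_{0,0}\otimes\Oo_{\PP_3^*},$$
which is precisely the composite appearing in (\ref{te}). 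The care required here is to verify the naturality of the connecting homomorphism in the family, i.e.\ that forming $R^1{\pi_2}_*$ of multiplication by the incidence section reproduces the Euler-composed-with-$\delta$ map on the nose, and not merely fibrewise. Once this is settled, the two short exact sequences share their first two terms and the map between them, so their cokernels are canonically isomorphic, yielding $\theta_E(1)\simeq R^1{\pi_2}_*\pi_1^*E$.
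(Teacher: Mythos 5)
Your proposal follows exactly the paper's route: tensor the structure sequence of $\mathbf{I}\subset Q\times\PP_3^*$ with $\pi_1^*E$, push forward along $\pi_2$, and identify the resulting four-term sequence with the defining sequence (\ref{te}) of $\theta_E(1)$. The paper states this in two lines and omits the justifications you supply (the vanishing of ${\pi_2}_*\pi_1^*E$ via the non-jumping of the generic conic, and the identification of the connecting map with the Euler-composed-with-$\delta$ map), so your write-up is a correct and more complete version of the same argument.
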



\section{Examples}
Let $\Mm(k)$ be the moduli space of stable vector bundles of rank 2 on $Q$ with the Chern classes $c_1=\Oo_Q(-1,-1)$ and $c_2=k$ with respect to the ample line bundle $\Oo_Q(1,1)$. The dimension of $\Mm(k)$ can be computed to be $4k-5$. By sending $E\in \Mm (k)$ to the set of jumping conics of $E$, we can define a morphism
$$S : \Mm(k) \rightarrow |\Oo_{\PP_3^*}(k-1)|\simeq \PP_N,$$
where $N={{k+2}\choose 3}-1$.

Let $Z=\{x_1,\cdots, x_k\}$ be a 0-dimensional subscheme of $Q$ with length $k$ in general position. If $E$ is a stable vector bundle fitted into the exact sequence,
$$0\rightarrow \Oo_Q \rightarrow E(1,1) \rightarrow I_Z(1,1) \rightarrow 0,$$
which is called \textit{a Hulsbergen bundle}, then $E$ is in $\Mm(k)$. Note that if $k\leq 4$, then $E\in \Mm(k)$ admits the above exact sequence. Conversely, let us consider the above extension. It is parametrized by
$$\PP (Z):=\PP \Ext^1 (I_Z(1,1), \Oo_Q)\simeq \PP H^0(\Oo_Z)^*.$$
If we give $\PP (Z)$ the coordinate system $(c_1, \cdots, c_k)$ corresponding to $Z$, then by the lemma (5.1.2) in Chapter 1 \cite{OSS} or \cite{C}, the bundle $E$ corresponding to $(c_1, \cdots, c_k)$ is locally free if and only if $c_i\not= 0$ for all $i$.

Now by the theorem (\ref{main}), $S(E)\subset \PP_3^*$ is a hypersurface of degree $k-1$.
\begin{lemma}${ } $
\begin{enumerate}
\item If $|Z\cap H|\geq 3$, then $h^0(E|_{C_H})\geq 2$.
\item If $|Z\cap H|\leq 2$, then $h^0(E|_{C_H})\leq 1$.
\end{enumerate}
\end{lemma}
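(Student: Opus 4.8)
The plan is to restrict the defining sequence of the Hulsbergen bundle to the conic $C_H\cong\PP_1$ and to read $h^0(E|_{C_H})$ off the resulting cohomology sequence. Twisting the defining sequence by $\Oo_Q(-1,-1)$ gives
$$0\rightarrow \Oo_Q(-1,-1)\rightarrow E\rightarrow I_Z\rightarrow 0.$$
Since $C_H$ is a Cartier divisor, $\Oo_{C_H}$ admits a length-one locally free resolution, so $\Tor_2(\Oo_Z,\Oo_{C_H})=0$ and hence $\Tor_1(I_Z,\Oo_{C_H})=0$; thus restricting to $C_H$ stays left exact and yields
$$0\rightarrow \Oo_{C_H}(-2)\rightarrow E|_{C_H}\rightarrow I_Z|_{C_H}\rightarrow 0,$$
where $\Oo_{C_H}(-2):=\Oo_Q(-1,-1)|_{C_H}\cong\Oo_{\PP_1}(-2)$.

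First I would determine $I_Z|_{C_H}$. Setting $W:=Z\cap C_H$ and restricting $0\rightarrow I_Z\rightarrow \Oo_Q\rightarrow \Oo_Z\rightarrow 0$ to $C_H$ identifies the torsion subsheaf of $I_Z|_{C_H}$ with $\Tor_1(\Oo_Z,\Oo_{C_H})\cong\Oo_W$ and its torsion-free quotient with $\Oo_{C_H}(-W)$, so that
$$0\rightarrow \Oo_W\rightarrow I_Z|_{C_H}\rightarrow \Oo_{C_H}(-W)\rightarrow 0,\qquad \Oo_{C_H}(-W)\cong\Oo_{\PP_1}(-w),$$
where $w:=|Z\cap H|=\length \Oo_W$ (the equality $\length \Oo_W=w$ uses that $Z$ is reduced and meets $C_H$ transversally, which holds in general position). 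Consequently $h^0(I_Z|_{C_H})=w$ if $w\geq 1$, and $h^0(I_Z|_{C_H})=h^0(\Oo_{C_H})=1$ if $w=0$.

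For part (1), the long exact sequence attached to the restricted sequence, together with $h^0(\Oo_{C_H}(-2))=0$ and $h^1(\Oo_{C_H}(-2))=1$, gives
$$h^0(E|_{C_H})\geq h^0(I_Z|_{C_H})-h^1(\Oo_{C_H}(-2))=w-1\geq 2\qquad (w\geq 3).$$
For part (2) with $w\leq 1$ nothing is needed, since $h^0(\Oo_{C_H}(-2))=0$ forces $h^0(E|_{C_H})\leq h^0(I_Z|_{C_H})\leq 1$. The delicate case is $w=2$, where $h^0(I_Z|_{C_H})=2$ and the inequality above only gives $h^0(E|_{C_H})\leq 2$; I expect this to be the main obstacle, amounting to showing that the connecting map $H^0(I_Z|_{C_H})\rightarrow H^1(\Oo_{C_H}(-2))\cong\CC$ does not vanish. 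The plan is to settle it by saturation: let $\bar{L}\subset E|_{C_H}$ be the kernel of the surjection $E|_{C_H}\twoheadrightarrow \Oo_{C_H}(-W)$ onto the torsion-free quotient of $I_Z|_{C_H}$. Then $\bar{L}$ is a subbundle fitting into $0\rightarrow \Oo_{C_H}(-2)\rightarrow \bar{L}\rightarrow \Oo_W\rightarrow 0$, so $\deg\bar{L}=-2+w=0$ and $\bar{L}\cong\Oo_{\PP_1}$. Since $h^0(\Oo_{C_H}(-W))=h^0(\Oo_{\PP_1}(-2))=0$, every global section of $E|_{C_H}$ already lies in $\bar{L}$, whence $h^0(E|_{C_H})=h^0(\bar{L})=1$. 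In fact this last computation applies verbatim for every $w\geq 1$ and yields the sharp value $h^0(E|_{C_H})=h^0(\Oo_{\PP_1}(w-2))=\max(w-1,0)$, which contains both parts of the lemma; the genericity of $Z$ enters only to guarantee that $C_H$ is smooth and that $W$ is reduced of length $w$.
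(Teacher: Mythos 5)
Your treatment of the case where $C_H$ is a smooth conic is correct, and in fact cleaner and sharper than the paper's: identifying the torsion of $I_Z|_{C_H}$ with $\Tor_1(\Oo_Z,\Oo_{C_H})\cong\Oo_W$, passing to the saturation $\bar{L}$, and reading off $h^0(E|_{C_H})=h^0(\Oo_{\PP_1}(w-2))=\max(w-1,0)$ handles both parts at once, whereas the paper splits the restricted extension by an $\Ext^1$-vanishing and argues case by case.

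However, there is a genuine gap. Your closing claim that ``the genericity of $Z$ enters only to guarantee that $C_H$ is smooth'' is not correct: the lemma must hold for \emph{every} $H\in\PP_3^*$, and the two-dimensional family of tangent planes of $Q$ cannot be avoided by any genericity assumption on $Z$ (smoothness of $C_H$ depends only on $H$, not on $Z$). For a tangent plane, $C_H=l_1+l_2$ is a reducible nodal conic, not $\PP_1$, and your machinery breaks down there: $\Oo_{C_H}(-W)$ need not be locally free at the node (e.g.\ when a point of $Z$ is the point of tangency), so $\bar{L}$ need not be a line bundle, and $h^0$ of a rank-one sheaf on $l_1\cup l_2$ is governed by a bidegree and the gluing condition at the node rather than by a single degree on $\PP_1$. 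This is exactly the case the paper spends most of its proof on, including the delicate subcase $Z\cap H=\{x,y\}$ with $y$ the tangency point, where one must check that the two sub-line-bundles $\Oo_{l_1}(1)$ and $\Oo_{l_2}$ do \emph{not} match up at the node in order to get $h^0=1$ rather than $2$. Since part (2) for tangent planes is what guarantees later that $S(E)$ has exactly $\binom{k}{3}$ singular points, this case cannot be dismissed; your argument needs to be supplemented by an analysis of $E|_{l_1\cup l_2}$ along the lines of the cited lemma of Id\`a--Manaresi.
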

\begin{proof}
Let $m=|Z\cap H|\geq 3$. If $C_H$ is a smooth conic, then by tensoring the above exact sequence with $\Oo_H$, we have $E|_{C_H}\simeq \Oo_{C_H}((m-1)p)\oplus \Oo_{C_H}(-mp)$ since $\Ext^1 (\Oo_{C_H}(-mp), \Oo_{C_H}((m-1)p))=0$. Thus, $h^0(E_{C_H})\geq 2$.

Let us assume that $C_H=l_1+l_2$, i.e. $H$ is a tangent plane of $Q$. Note that
$$h^0(C_H,\Oo(a_1,a_2))= \left\{
                                                     \begin{array}{ll}
                                                       0, & \hbox{if $a_i<0$;} \\
                                                       a_i, & \hbox{if $a_i\geq 0$, $a_j<0$;} \\
                                                       a_1+a_2+1, & \hbox{if $a_i\geq0$}
                                                     \end{array}
                                                   \right.
$$
where $\Oo(a_1,a_2):=\Oo_{l_1}(a_1)\cup \Oo_{l_2}(a_2)$. From the lemma (2.1) in \cite{IM}, it is clear that $h^0(E_{C_H})\geq 2$. For example, when $m=3$ and $Z\cap H=\{x,y,z\}$, $x,y\in l_1$, $z\in l_2$ and $q=l_1\cap l_2\not\in Z$, we have
\begin{equation}\label{eq4}
0\rightarrow \Oo_{l_1}(1)\cup\Oo_{l_2} \rightarrow E \rightarrow \Oo_{l_1}(-2)\cup \Oo_{l_2}(-1) \rightarrow 0,
\end{equation}
and in particular the filtrations in the lemma (2.1) of \cite{IM}, coincide in $q$. Thus, $h^0(E|_{C_H})=2$.

Assume that $|Z\cap H|\leq 2$. If $C_H$ is smooth, we obtain in a similar way as above that $E_{C_H}$ is either $\Oo_{C_H}(-2p)\oplus \Oo_{C_H}$ or $\Oo_{C_H}(-p)\oplus \Oo_{C_H}(-p)$ and thus $h^0(E|_{C_H})\leq 1$. When $H$ is a tangent plane section at $q\in Q$, we can also similarly show that $h^0(E|_{C_H})\leq 1$, except when $Z\cap H=\{x,y\}$ and $y=q$, say $x\in l_1$. In this case, we have
\begin{align*}
E|_{l_1}&\simeq \Oo_{l_1}(1)\oplus \Oo_{l_1}(-2), \text{   and}\\
E|_{l_2}&\simeq \Oo_{l_2}\oplus \Oo_{l_2}(-1).
\end{align*}
Since $y=q$ is the intersection point of $l_1$ and $l_2$, the sub-bundles $\Oo_{l_1}(1)$ and $\Oo_{l_2}$ in (\ref{eq4}) do not coincide at $y$. So $h^0(E|_{C_H})=1$.
\end{proof}

Since we have $k \choose 3$ hyperplanes that meet $Z$ at 3 points and thus $S(E)$ has at least $k\choose 3$ singular points. Thus we have the following statement.
\begin{proposition}
For a Hulsbergen bundle $E\in \Mm(k)$, $S(E)$ is a hypersurface of degree $k-1$ in $\PP_3^*$ with $k \choose 3$ singular points.
\end{proposition}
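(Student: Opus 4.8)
The plan is to identify the singular locus of $S(E)$ with a set of planes read off from the incidence geometry of $Z$, and then to count that set by a general-position argument. Two inputs make this possible. First, the cohomology sequence of $0\to E(-1,-1)\to E\to E|_{C_H}\to 0$ exhibits $H^0(E|_{C_H})$ as $\ker\delta(z)$, so $h^0(E|_{C_H})=\text{corank}\,\delta(z)$ for every $H$; thus the stratification of $\PP_3^*$ by $\text{corank}\,\delta$ agrees with the stratification by $h^0(E|_{C_H})$. Second, the preceding Proposition, together with the description of the singular locus of a symmetric determinantal hypersurface in \cite{Harris}, identifies $\text{Sing}(S(E))$ with the second determinantal stratum $S_2=\{H:\text{corank}\,\delta(z)\geq 2\}=\{H:h^0(E|_{C_H})\geq 2\}$. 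So the task reduces to counting the planes $H$ with $h^0(E|_{C_H})\geq 2$. Applying the Lemma (part (1) in one direction, the contrapositive of part (2) in the other) turns this into $\text{Sing}(S(E))=\{H\in\PP_3^*: |Z\cap H|\geq 3\}$, and the problem becomes purely combinatorial: count the planes of $\PP_3$ meeting $Z$ in at least three points.

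For the count I would use that $Z=\{x_1,\dots,x_k\}$ is in general position, so no three of its points are collinear and no four are coplanar. Then each triple $\{x_i,x_j,x_l\}$ spans a unique plane $H_{ijl}$; this plane meets $Z$ in exactly those three points (no four coplanar), and distinct triples yield distinct planes (two triples spanning one plane would force four coplanar points). Hence the planes with $|Z\cap H|\geq 3$ are in bijection with the $\binom{k}{3}$ triples of $Z$, giving exactly $\binom{k}{3}$ singular points. The degree assertion is Theorem \ref{main}.

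The genuinely delicate point — and the main obstacle — is the inclusion $\text{Sing}(S(E))\subseteq S_2$, i.e. that $S(E)$ is smooth wherever $\delta(z)$ has corank exactly one (the opposite inclusion $S_2\subseteq\text{Sing}(S(E))$ is the easy lower-bound direction supplied directly by the preceding Proposition). Concretely, at a corank-one point $z_0$ with $\ker\delta(z_0)=\langle v\rangle$, Jacobi's formula and the fact that the adjugate of a symmetric corank-one matrix is proportional to $vv^{t}$ show that the gradient of $\det\delta$ at $z_0$ is proportional to the linear form $z\mapsto v^{t}\delta(z)v$. Smoothness of $S(E)$ at $z_0$ is therefore equivalent to this form being not identically zero, which is exactly the transversality of the determinantal map $\phi$ underlying the symmetric-determinantal description. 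I would either quote this non-vanishing from the theory used in the preceding Proposition, or verify it directly for Hulsbergen bundles by unwinding $\delta$ through the defining sequence $0\to\Oo_Q\to E(1,1)\to I_Z(1,1)\to 0$.

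Finally, I would make sure the general-position hypothesis on $Z\subset Q$ really delivers what the count needs: that no four points of $Z$ lie on a common plane section (equivalently, on a common conic of $Q$) and no three lie on a line of a ruling of $Q$. Both are codimension-one conditions on the configuration of points of $Z$ and are hence avoided for generic support, so the bijection between singular points and triples of $Z$ is complete and the singular points number exactly $\binom{k}{3}$.
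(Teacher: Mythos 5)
Your route is essentially the paper's: combine the Lemma (planes meeting $Z$ in $\geq 3$ points give $h^0(E|_{C_H})\geq 2$, planes meeting $Z$ in $\leq 2$ points give $h^0(E|_{C_H})\leq 1$) with the criterion that $h^0(E|_{C_H})\geq 2$ forces $H\in \text{Sing}(S(E))$, and then count the $\binom{k}{3}$ planes spanned by triples of the general-position set $Z$. That part matches the paper exactly, and your general-position bookkeeping (no four points coplanar, no three on a ruling, distinct triples give distinct planes) is correct and slightly more explicit than the paper's one-line count. Where you go beyond the paper is in isolating the converse inclusion $\text{Sing}(S(E))\subseteq S_2$: the paper's text literally only establishes ``at least $\binom{k}{3}$ singular points'' and then states the exact count, relying on the unproved assertion in the earlier proposition that $S_2$ \emph{is} the singular locus. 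Your Jacobi-formula reduction --- at a corank-one point $z_0$ with $\ker\delta(z_0)=\langle v\rangle$ the gradient of $\det\delta$ is proportional to $z\mapsto v^{t}\delta(z)v$, so smoothness there is exactly the non-vanishing of this linear form --- is the right way to phrase the missing step. But note that you have only reduced the problem, not solved it: the non-vanishing of $v^{t}\delta(z)v$ is a genuine transversality claim (it amounts to the subspaces $\a_1(V_1^*)v$ and $\a_2(V_2^*)v$ not being orthogonal under the pairing $K_{0,1}\times K_{1,0}\to\CC$), and ``quote it from the theory'' or ``verify it by unwinding $\delta$'' is a promissory note rather than an argument. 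Since the paper itself leaves this same step implicit, your proposal is at least as complete as the source; just be aware that the exact count, as opposed to the lower bound, still rests on that unverified transversality.
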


\subsection{}
If $c_2=1$, then there is no stable vector bundles. In fact, it can be shown \cite{Huh} that there exists a unique strictly semi-stable vector bundle $E_0:=\Oo_Q(-1,0)\oplus \Oo_Q(0,-1)$. Since $h^0(E_0)=h^1(E_0(-1,-1))=0$, we have $h^0(E_0|_{C_H})=0$
for all $H\in \PP_3^*$. Hence, if we extend the concept of the jumping conic to semi-stable bundles, we can say that there is no jumping conic of $E_0$. It is consistent with the fact that $S(E_0)$ is a hypersurface of degree 0 in $\PP_3^*$.

\subsection{}
If $c_2=2$, then $S(E)$ is a hyperplane in $\PP_3^*$. So the map $S$ is from $\Mm(2)$ to $\PP_3$. It was shown in \cite{Huh} that $S$ extends to an isomorphism
$$\overline{S} : \overline{\Mm}(2) \rightarrow \PP_3,$$
where $\overline{\Mm}(2)$ is the compactification of $\Mm (2)$ in the sense of Gieseker \cite{Gieseker}, whose boundary consists of non-locally free sheaves with the same numeric invariants. In fact, for $E\in \overline{\Mm}(2)$, we have $h^0(E(1,1))=3$ and can define a morphism from $\PP_2 \simeq \PP H^0(E(1,1))$ to the Grassmannian $Gr(1,3)$, sending a section $s$ to the line in $\PP_3$ containing the two zeros of $s$. The image of this map can be shown to be a 2-cycles of $Gr(1,3)$ corresponding to the unique point in $\PP_3$. $\overline{S}$ maps $E$ to this uniquely determined point. Moreover, $\Mm (2)$ maps to $\PP_3 \backslash Q$ via $S$ and in particular, $S(E)$ determines $E$ completely. Let $Z$ be a 0-cycle on $Q$ with length 2 such that the support of $Z$ does not lie on a line in $Q$ and consider an extension family $\PP(Z)$ of $E$, admitting the following exact sequence,
$$0\rightarrow \Oo_Q \rightarrow E(1,1) \rightarrow I_Z(1,1) \rightarrow 0.$$
Then, $\PP(Z)\simeq \PP_1$ is the secant line of $Q$ passing through the support of $Z$. From this description, it can be easily checked that $H\in S(E)$ if and only if $E|_{C_H}\simeq \Oo_{C_H}\oplus \Oo_{C_H}(-2p)$, which is consistent with the fact that $S(E)$ is smooth.

\subsection{}If $c_2=3$, we have a map $S: \Mm(3) \rightarrow |\Oo_{\PP_3^*}(2)|\simeq \PP_9$, where $S(E)$ is a quadric in $\PP_3^*$. $E(1,1)$ is fitted into the following exact sequence,
\begin{equation}\label{ext}
0\rightarrow \Oo_Q \rightarrow E(1,1) \rightarrow I_Z(1,1) \rightarrow 0,
\end{equation}
with a 0-cycle $Z$ on $Q$ with length 3. If $Z$ is contained in a line on $Q$, then $E$ contains $\Oo_Q(0,-1)$ or $\Oo_Q(-1,0)$ as a sub-bundle, contradicting to the stability of $E$. Thus there exists a unique hyperplane $H$ in $\PP_3$ containing $Z$.
\begin{remark}
Conversely, if $Z$ is not contained in any line on $Q$, then it can be easily shown from the standard computation that any sheaf $E$ admitting an exact sequence (\ref{ext}) is semi-stable. In fact, if a subscheme of length 2 of $Z$ is contained in a line on $Q$, any sheaf $E$ admitting (\ref{ext}) is strictly semi-stable.
\end{remark}
Now let us consider a map
$$\eta_E : \PP_1 \simeq \PP H^0(E(1,1)) \rightarrow Gr(2,3) \simeq \PP_3^*,$$
sending a section $s\in H^0(E(1,1))$ to the projective plane in $\PP_3$ containing a 0-cycle $Z$ in the exact sequence (\ref{ext}), which is obtained from $s$. Before proving that $\eta_E$ is a constant map, we suggest a different proof of the fact that $S(E)$ is a quadric cone in $\PP_3^*$.
\begin{proposition}
For $E\in \Mm(3)$, $S(E)$ is a quadric cone in $\PP_3^*$ with a vertex point.
\end{proposition}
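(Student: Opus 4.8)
The plan is to read the geometry of $S(E)$ directly off its symmetric determinantal presentation. By Theorem \ref{main} with $k=3$, the set $S(E)$ is the quadric $\{\det\delta(z)=0\}\subset\PP_3^*$, where $\delta(z)\in\Sym^2(K_{0,0})$ is a symmetric $2\times 2$ matrix whose entries are linear forms in the coordinate $z\in V_1^*\otimes V_2^*$; it is a genuine quadric surface, and not all of $\PP_3^*$, because $\det\delta\not\equiv 0$ by Proposition \ref{GM}. The decisive structural observation is that $\delta$ is a linear map from the $4$-dimensional space $V_1^*\otimes V_2^*$ to the $3$-dimensional space $\Sym^2(K_{0,0})$, so that $\ker\delta\neq 0$ for purely dimensional reasons.

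First I would exhibit a vertex. Choose $0\neq z_0\in\ker\delta$, so $\delta(z_0)=0$, and let $H_0\in\PP_3^*$ be the corresponding hyperplane. Since $\delta$ is linear, for any $w$ one has $\delta(z_0+sw)=s\,\delta(w)$, and hence $\det\delta(z_0+sw)=s^2\det\delta(w)$. Thus every line through $H_0$ either lies entirely on $S(E)$, when $\det\delta(w)=0$, or meets $S(E)$ only at $H_0$ with multiplicity two. This is precisely the assertion that $H_0$ is a vertex of $S(E)$, so $S(E)$ is a quadric cone; in particular $S(E)$ is singular and has rank at most $3$.

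It then remains to show that the vertex is a single point, i.e. that $\dim\ker\delta=1$ so that the quadric has rank exactly $3$. The projectivized kernel $\PP(\ker\delta)$ is exactly the locus where $\delta(z)$ has corank $2$, which by the Remark identifying $\text{corank}(\delta(z))=h^0(E|_{C_H})$ equals $\{H:h^0(E|_{C_H})=2\}$, and which by the Proposition on the singular locus of symmetric determinantal varieties is the singular locus of $S(E)$. Since $k=3\leq 4$, the bundle $E$ is a Hulsbergen bundle, fitting into $0\to\Oo_Q\to E(1,1)\to I_Z(1,1)\to 0$ with $Z$ of length $3$; by stability $Z$ lies on no line of $Q$, so it spans a unique plane $H_0$. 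By the Lemma, $h^0(E|_{C_H})\geq 2$ holds if and only if $|Z\cap H|\geq 3$, that is, if and only if $H\supseteq Z$, that is, if and only if $H=H_0$. Hence $\PP(\ker\delta)=\{H_0\}$ is a single point; being a projective linear subspace, it must then be $0$-dimensional, forcing $\dim\ker\delta=1$ and rank $\delta=3$. Therefore $S(E)$ is a quadric cone with the single vertex point $H_0$.

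The hard part is the last step: excluding a positive-dimensional vertex, equivalently ruling out the degenerate possibilities that $S(E)$ is a pair of distinct planes (rank $2$) or a double plane (rank $1$). Everything reduces to knowing that exactly one hyperplane is \emph{doubly} jumping, and it is here that the Hulsbergen presentation together with the cohomological count of the Lemma does the essential work, pinning the corank-$2$ locus down to the single plane $H_0$ spanned by $Z$. The remaining steps are then formal consequences of the determinantal description and elementary linear algebra.
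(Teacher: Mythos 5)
Your proof is correct in substance but follows a genuinely different route from the paper's. The paper argues synthetically: it computes the splitting type of $E|_{C_H}$ according to whether $H$ contains $3$, $2$, or $1$ points of the zero cycle $Z$ of a section of $E(1,1)$, observes that each plane $H(z_i)\subset\PP_3^*$ of hyperplanes through $z_i$ meets $S(E)$ in two lines crossing at $H_s$, and then eliminates the smooth, double-plane, and plane-pair cases one by one (three distinct planes cannot all be the tangent plane of a smooth quadric at one point, etc.). You instead exploit the determinantal presentation: the dimension count $\dim(V_1^*\otimes V_2^*)=4>3=\dim\Sym^2(K_{0,0})$ forces $\ker\delta\neq 0$, which immediately yields a vertex, and the Lemma on $|Z\cap H|$ versus $h^0(E|_{C_H})$ pins the corank-$2$ locus $\PP(\ker\delta)$ to the single plane spanned by $Z$, which rules out ranks $1$ and $2$ in one stroke. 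Your argument is shorter and avoids the case analysis; the paper's argument has the side benefit of explicitly identifying which hyperplanes are jumping and with what splitting type, which is used in the subsequent corollary and remark. Two points where you should add a line: (i) the passage from $\dim\ker\delta=1$ to ``the quadric has rank exactly $3$'' uses that $\det$ is a nondegenerate quadratic form on the $3$-dimensional space $\Sym^2(K_{0,0})$, so that for surjective $\delta$ the radical of $q=\det\circ\delta$ is exactly $\ker\delta$ (for non-surjective $\delta$ the radical can be strictly larger, which is exactly the degenerate case you are excluding); and (ii) the Lemma you invoke is stated for $Z$ in general position, so you should note that stability of $E$ forces $Z$ to consist of three points no two of which lie on a ruling of $Q$, which is what its proof actually requires (the paper's own proof makes the same implicit reduction).
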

\begin{proof}
Let $s$ be a section of $E(1,1)$ from which $E(1,1)$ admits an exact sequence (\ref{ext}) for a 0-dimensional cycle $Z$ of length 3. Let $Z=\{z_1, z_2, z_3\}$. If $H_s$ be a hyperplane in $\PP_3$ containing $Z$, then $E|_{C_{H_s}}$ admits an exact sequence,
$$0\rightarrow \Oo_{C_{H_s}}(p) \rightarrow E|_{C_{H_s}} \rightarrow \Oo_{C_{H_s}}(-3p)\rightarrow 0,$$
where $p$ is a point on $C_{H_s}$. It splits since $H^1(\Oo_{C_{H_s}}(-6p))=0$. Thus $E_{C_{H_s}}$ is isomorphic to $\Oo_{C_{H_s}}(p)\oplus \Oo_{C_{H_s}}(-3p)$ and in particular, $H_s\in S(E)$. Similarly, if $H$ contains only 2 points of $Z$, then $H\in S(E)$. It can be shown that $H\not \in S(E)$ if $H$ contains only 1 point of $Z$. Let us consider a hyperplane $H(z_1)$ in $\PP_3^*$, whose points correspond to the hyperplanes in $\PP_3$ containing $z_1$. From the previous argument, we know that the intersection of $H(z_1)$ with $S(E)$ consists of 2 straight lines whose intersection point corresponds to the hyperplane $H_s$. If $S(E)$ is a smooth quadric, then $H(z_1)$ is the tangent plane of $S(E)$ at $H_s$. Similarly, we can define $H(z_i)$, $i=2,3$, and they would also become the tangent plane of $S(E)$ at $H_s$, which is absurd. We can similarly derive a contradiction in the case when $Q$ is a hyperplane in $\PP_3^*$ with multiplicity 2. Let us assume that $S(E)$ consists of two hyperplanes meeting at a line $l$. Clearly, $H_s$ lies in $l$. There are 3 lines on $S(E)$ corresponding to the hyperplanes containing 2 points of $Z$ and they are exactly the intersection of $H(z_i)$'s. Hence there is a hyperplane of $S(E)$ that contains two intersecting lines of $H(z_i)$'s. It is impossible since the two intersecting lines of $H(z_i)$ with $S(E)$ lie on different components of $S(E)$. Thus $Q$ is a quadric cone with a vertex point.
\end{proof}
\begin{corollary}
For $E\in \Mm (3)$, the map $\eta_E$ is a constant map to the vertex point of $S(E)$.
\end{corollary}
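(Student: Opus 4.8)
The plan is to show that for every section $s$ the plane $\eta_E(s)=H_s$ coincides with the vertex of the quadric cone $S(E)$; since that vertex is intrinsic to $S(E)$ and independent of $s$, constancy follows immediately.

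First I would fix an arbitrary $s\in H^0(E(1,1))$ and form, via the exact sequence (\ref{ext}), the associated length-$3$ cycle $Z=\{z_1,z_2,z_3\}$ together with the unique plane $H_s\subset\PP_3$ spanning it. By stability $Z$ lies on no line of $Q$, hence $z_1,z_2,z_3$ are not collinear in $\PP_3$ and $H_s$ is genuinely a plane; by definition $\eta_E(s)=H_s\in\PP_3^*$.

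Next I would transport the local picture already established in the proof of the preceding proposition. Among the hyperplanes of $\PP_3$ through $z_1$, that is, the points of the plane $H(z_1)\subset\PP_3^*$, the jumping ones are exactly those containing a second point of $Z$. These comprise the pencil of planes through $\overline{z_1z_2}$ and the pencil through $\overline{z_1z_3}$, i.e. two lines in $\PP_3^*$; they are distinct because $z_1,z_2,z_3$ are not collinear, and both pass through $H_s$. Thus $H(z_1)\cap S(E)$ is a pair of distinct lines $\ell_1,\ell_2$ meeting at $H_s$.

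Finally I would invoke that $S(E)$ is a quadric cone (the preceding proposition). On a quadric cone every line contained in the surface passes through the vertex $v$, so two distinct such lines can meet only at $v$. Since $\ell_1,\ell_2\subset S(E)$ both pass through $H_s$ and both through $v$, and two distinct lines meet in a single point, I conclude $H_s=v$. As $s$ was arbitrary and $v$ is a single fixed point, $\eta_E$ is constant with value the vertex, as asserted. The only substantive point is this last identification, which rests entirely on the ruling structure of the cone; everything else is bookkeeping carried over from the previous proof, so I anticipate no real obstacle.
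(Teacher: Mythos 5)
Your proof is correct and follows essentially the same route as the paper: it uses the preceding proposition's observation that $H(z_i)\cap S(E)$ is a pair of distinct lines through $H_s$, and then the fact that any two distinct lines on a quadric cone can only meet at the vertex. You merely spell out the ruling argument that the paper leaves implicit in the phrase ``the only possibility is that $H_s$ is the vertex point.''
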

\begin{proof}
Using the notation in the proof of the preceding proposition, the planes $H(z_i)$ meet with $S(E)$ at two different lines. The only possibility is that $H_s$ is the vertex point of $S(E)$. Now we get the assertion since this argument is valid for all sections of $E(1,1)$.
\end{proof}
\begin{remark}
The hyperplane $H$ corresponding to the vertex point of $S(E)$ is the unique hyperplane for which $E|_{C_H}$ is isomorphic to $\Oo_{C_H}(-3p)\oplus \Oo_{C_H}(p)$, where $p$ is a point on $Q$. For the other hyperplanes in $S(E)$, $E|_{C_H}$ become $\Oo_{C_H}(-2p)\oplus \Oo_{C_H}$. \end{remark}
By sending $E\in \Mm(3)$ to the vertex point of $S(E)$, we can define a map
$$\Lambda^* : \Mm(3) \rightarrow \PP_3^*.$$
Let $p$ be a point in $\PP_3^*\backslash Q^*$, where $Q^*$ is the dual of $Q$, whose points correspond to the tangent planes of $Q$. We can pick a stable vector bundle $E$ fitted into the exact sequence (\ref{ext}) for a 0-cycle $Z$ of length 3 whose support lies in the hyperplane section corresponding to $p$. Then $E$ maps to the point $p$ via $\Lambda^*$. In the case when $p\in Q^*$, we can also choose a 0-cycle $Z$ for which there exists a stable vector bundle $E$ mapping to $p$. Thus $\Lambda^*$ is surjective and its generic fibres are 4-dimensional.

Now let us consider the determinant map
$$\lambda_E:\wedge^2H^0(E(1,1))\rightarrow H^0(\Oo_Q(1,1)).$$
Since $h^0(E(1,1))=2$, the dimension of the domain is 1-dimensional.
\begin{lemma}
$\lambda_E$ is injective.
\end{lemma}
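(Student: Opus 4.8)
The plan is to reduce the statement to the non-vanishing of $\lambda_E$ and then to rule out vanishing by a saturation argument. First I would record the target of the map. For the rank $2$ bundle $E(1,1)$ one has $\det(E(1,1))=\det(E)\otimes \Oo_Q(2,2)=\Oo_Q(-1,-1)\otimes \Oo_Q(2,2)=\Oo_Q(1,1)$, so $\lambda_E$ indeed lands in $H^0(\Oo_Q(1,1))$ and is the wedge-product map $s\wedge t\mapsto s\wedge t$. Since $h^0(E(1,1))=2$, the domain $\wedge^2 H^0(E(1,1))$ is one-dimensional, spanned by $s\wedge t$ for any basis $\{s,t\}$ of $H^0(E(1,1))$. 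Thus injectivity of $\lambda_E$ is equivalent to the single assertion that $s\wedge t\neq 0$ in $H^0(\Oo_Q(1,1))$ for such a basis.

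Next I would argue by contradiction, assuming $s\wedge t=0$. This forces $s$ and $t$ to be pointwise proportional, so the subsheaf $G:=\Oo_Q\cdot s+\Oo_Q\cdot t\subset E(1,1)$ generated by the two sections has rank $1$. The key input is the exact sequence (\ref{ext}) attached to $s$, namely $0\to \Oo_Q\xrightarrow{s} E(1,1)\to I_Z(1,1)\to 0$ with $Z$ of length $3$: since $I_Z(1,1)$ is torsion-free, the image $\Oo_Q\cdot s$ is saturated in $E(1,1)$.

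Now I would combine the two facts. The quotient $G/(\Oo_Q\cdot s)$ injects into $E(1,1)/(\Oo_Q\cdot s)\simeq I_Z(1,1)$; on the other hand $G$ and $\Oo_Q\cdot s$ both have rank $1$, so $G/(\Oo_Q\cdot s)$ has rank $0$, i.e.\ is a torsion sheaf. A torsion subsheaf of the torsion-free sheaf $I_Z(1,1)$ must vanish, hence $G=\Oo_Q\cdot s$ and $t\in H^0(\Oo_Q\cdot s)=\CC\, s$. This contradicts the linear independence of $s$ and $t$, proving $s\wedge t\neq 0$ and hence the injectivity of $\lambda_E$.

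The computation of $\det(E(1,1))$ and the linear-algebra reduction are routine; the step that carries the real content is the saturation claim, i.e.\ reading off from (\ref{ext}) that $\Oo_Q\cdot s$ is saturated because its cokernel $I_Z(1,1)$ is torsion-free. I expect this to be the only point that needs care, since it is precisely what converts the geometric condition $s\wedge t=0$ into the algebraic conclusion $t\in\CC\, s$; once it is in place the contradiction is immediate.
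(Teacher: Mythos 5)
Your proof is correct, and it reaches the contradiction by a different mechanism than the paper. Both arguments reduce injectivity to showing $s\wedge t\neq 0$ for a basis $\{s,t\}$ of $H^0(E(1,1))$, and both observe that $s\wedge t=0$ would force the two sections to generate a rank-one subsheaf of $E(1,1)$. The paper then passes to the saturation $L$ of that subsheaf, a line bundle with $h^0(L)\geq 2$, notes that the only candidates are $\Oo_Q(1,0)$ and $\Oo_Q(0,1)$, and rules these out by the stability of $E$. You instead read off from the sequence (\ref{ext}) that $\Oo_Q\cdot s$ is already saturated in $E(1,1)$ (its cokernel $I_Z(1,1)$ is torsion-free), so the rank-one subsheaf generated by $s$ and $t$ must coincide with $\Oo_Q\cdot s\simeq\Oo_Q$, whence $t\in H^0(\Oo_Q)\cdot s=\CC\,s$, contradicting linear independence. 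Your route has the advantage of avoiding any classification of line bundles with two sections, but it is not more elementary in substance: the availability of (\ref{ext}) for an arbitrary nonzero section $s$ --- that is, the fact that $s$ vanishes on a $0$-dimensional scheme rather than along a divisor --- is itself a consequence of the stability of $E$ (a nonzero effective divisorial part $(a,b)$ of the zero locus would give $\Oo_Q(a-1,b-1)\subset E$ with $a+b\geq 1$, violating stability). Since the paper establishes (\ref{ext}) for sections of $E(1,1)$ earlier in the $c_2=3$ discussion, you are entitled to quote it, and your argument is complete as written.
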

\begin{proof}
We follow the argument in the proof of the lemma (6.6) in \cite{OPP}. Let $s_1$, $s_2$ be two linearly independent sections of $E(1,1)$. Assume that $s_1\wedge s_2$ maps to $0$ via $\lambda_E$. It would generate a line subbundle $L$ of $E(1,1)$ with $h^0(L)=2$. The only choices for $L$ is $\Oo_Q(0,1)$ and $\Oo_Q(1,0)$, and both contradict the stability of $E$.
\end{proof}
Let us define $q_E$ to be the point in $\PP_3^* \simeq \PP H^0(\Oo_Q(1,1))$ corresponding to the image of $\lambda_E$. Since $E(1,1)$ is fitted into the exact sequence (\ref{ext}), $H^0(E(1,1))$ can be considered as the direct sum of $H^0(\Oo_Q)$ and $H^0(I_Z(1,1))$, so $\wedge^2 H^0(E(1,1))$ is isomorphic to $H^0(I_Z(1,1))$. From the long exact sequence of cohomology of the exact sequence,
$$0\rightarrow I_Z(1,1) \rightarrow \Oo_Q(1,1) \rightarrow \Oo_Z \rightarrow 0 ,$$
$H^0(I_Z(1,1))$ is embedded into $H^0(\Oo_Q(1,1))$. This embedding is determined by the injection of $H^0(\Oo_Z)^*$ into $H^0(\Oo_Q(1,1))^*$, i.e. the hyperplane in $\PP_3$ containing $Z$. We know from the preceding corollary that this hyperplane is independent on the sections of $E(1,1)$. Thus the embedding of $H^0(I_Z(1,1))$ into $H^0(\Oo_Q(1,1))$ is independent on $Z$ and it would give the same map as $\lambda_E$. As a quick consequence of this argument, we obtain that the image of $\lambda_E$ corresponds to the unique hyperplane in $\PP_3$ containing $Z$. In other words, we obtain the following statement.
\begin{proposition}
$q_E$ is the vertex point of $S(E)$.
\end{proposition}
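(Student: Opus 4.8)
The plan is to show that the line $\Image \lambda_E \subset H^0(\Oo_Q(1,1))$ coincides with the one-dimensional subspace $H^0(I_Z(1,1))$ of sections vanishing on $Z$, and then to recognise the corresponding point of $\PP_3^*\simeq \PP H^0(\Oo_Q(1,1))$ as the unique hyperplane containing $Z$. By the preceding corollary this hyperplane is exactly the vertex of $S(E)$, so the identification finishes the proof.

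First I would take cohomology in the defining sequence (\ref{ext}). As $h^1(\Oo_Q)=0$, the sequence
$$0\rightarrow H^0(\Oo_Q) \rightarrow H^0(E(1,1)) \rightarrow H^0(I_Z(1,1)) \rightarrow 0$$
is exact, and since $h^0(E(1,1))=2$ it presents $H^0(E(1,1))$ as an extension of the line $H^0(I_Z(1,1))$ by the line $H^0(\Oo_Q)=\langle s_0\rangle$, where $s_0$ is the section cutting out $Z$. Hence there is a canonical isomorphism $\wedge^2 H^0(E(1,1))\simeq H^0(\Oo_Q)\otimes H^0(I_Z(1,1))\simeq H^0(I_Z(1,1))$ carrying $s_0\wedge s_2$ to the image $\bar{s}_2$ of $s_2$ in $H^0(I_Z(1,1))$.

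Next I would identify the determinant $\lambda_E(s_0\wedge s_2)\in H^0(\Oo_Q(1,1))$ with $\bar{s}_2$ under the embedding $H^0(I_Z(1,1))\hookrightarrow H^0(\Oo_Q(1,1))$ coming from $0\rightarrow I_Z(1,1)\rightarrow \Oo_Q(1,1)\rightarrow \Oo_Z\rightarrow 0$. On the open set where $s_0$ is nonvanishing the bundle $E(1,1)$ splits as $\Oo_Q\cdot s_0$ together with a complement mapping isomorphically onto $I_Z(1,1)$, so under the canonical isomorphism $\wedge^2 E(1,1)\simeq \Oo_Q(1,1)$ the section $s_0\wedge s_2$ agrees with the image of $\bar{s}_2$. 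Thus $\Image \lambda_E$ is exactly the line of sections of $\Oo_Q(1,1)$ vanishing on $Z$, whose zero locus is the conic cut by the unique hyperplane $H$ containing $Z$; therefore $q_E$ is the point of $\PP_3^*$ corresponding to $H$. Since $\eta_E(s)$ is by construction the hyperplane containing the $Z$ obtained from $s$, the preceding corollary (that $\eta_E$ is constant with value the vertex of $S(E)$) then forces $q_E$ to be this vertex.

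The hard part will be the compatibility in the second step: verifying that the abstract determinant map $\lambda_E$ really matches, under the two natural identifications, the ideal-sheaf embedding $H^0(I_Z(1,1))\hookrightarrow H^0(\Oo_Q(1,1))$. This is a naturality statement for the determinant of the sub/quotient structure $\Oo_Q\subset E(1,1)\twoheadrightarrow I_Z(1,1)$; although it reduces to the local frame computation above, one must check that the identification is independent of the chosen splitting and of $s_2$, which is ensured precisely because the hyperplane containing $Z$ does not depend on the section of $E(1,1)$.
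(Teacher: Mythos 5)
Your proposal is correct and follows essentially the same route as the paper: both identify $\wedge^2 H^0(E(1,1))$ with $H^0(I_Z(1,1))$ via the defining extension, recognise the image of $\lambda_E$ inside $H^0(\Oo_Q(1,1))$ as the line of sections cutting out the unique hyperplane containing $Z$, and then invoke the preceding corollary (constancy of $\eta_E$) to conclude that this hyperplane is the vertex of $S(E)$. Your local-frame verification that $\lambda_E(s_0\wedge s_2)$ agrees with the image of $\bar{s}_2$ under the embedding $H^0(I_Z(1,1))\hookrightarrow H^0(\Oo_Q(1,1))$ is a welcome addition of detail that the paper only asserts.
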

\begin{remark}
Let $f_Q$ be the polar map from $\PP_3$ to $\PP_3^*$ given by
\begin{equation}
[x_0, \cdots, x_3] \mapsto [\frac{\partial f}{\partial t_0}(x), \cdots, \frac{\partial f}{\partial t_3}(x)],
\end{equation}
where $f$ is the homogeneous polynomial of degree 2 defining $Q$. Then we have a surjective map from $\Mm(3)$ to $\PP_3$,
$$\Lambda := f_Q^{-1}\circ \Lambda^* : \Mm(3) \rightarrow \PP_3.$$
For $E\in \Mm(3)$, let $H_E$ be the hyperplane of $\PP_3$ corresponding to $q_E$. Note that $C_{H_E}=H_E\cap Q$ is the set of points $p\in Q$ for which $\Lambda(E)$ is contained in the tangent plane of $Q$ at $p$. Thus we can define the map $\Lambda$ by sending $E$ to the intersection point of the tangent planes at the support of $Z$ in the exact sequence (\ref{ext}), which is independent on the choice of a section of $E(1,1)$.
\end{remark}
Recall that the set of singular quadrics in $\PP_3^*$ is the discriminant hypersurface $\Dd _2$ in $\PP_9$ defined by the equation $\det (\Aa)=0$, where $\Aa$ is a symmetric $4\times 4$-matrix. By differentiating, we know that the singular points of $\Dd_2$ are defined by the determinants of $3\times 3$-minors of $\Aa$, i.e. the singular points of $\Dd_2$ correspond to the singular quadrics of rank $\leq 2$. Let $\Dd_2^0$ be the smooth part of $\Dd_2$. Then we have the following picture,
\begin{equation}
\xymatrix{ \Mm(3) \ar[r]^S\ar[rd]_{\Lambda^*} & \Dd_2^0\ar[d]\\
 & \PP_3^*,}
\end{equation}
where $\Dd_2^0$ is an open Zariski subset of a quartic hypersurface $\Dd_2$ of $\PP_9$ and the vertical map sends a singular quadric of rank 3 to its vertex point.

Let $E\in (\Lambda^*)^{-1}(q_E)$ with $q_E\not \in Q^*$. Thus $H_E$ is not a tangent plane of $Q$ and so $C_{H_E}$ is a smooth conic on $H_E$. Let $\PP_2^*$ be the image of $H_E$ via the polar map $f_Q$, which is a hyperplane of $\PP_3^*$, not containing $q_E$. Then $\PP_2^*$ contains the dual conic $C_{H_E}^*$ of $C_{H_E}$ via $f_Q|_{H_E}$. Let $\pi_{q_E}$ be the projection map from $\PP_3^*$ to $\PP_2^*$ at $q_E$. Then we can assign a smooth conic $C(E):=\pi_{q_E}(S(E)) \subset \PP_2^*$ to $E$, i.e. we have a map
$$\pi_{q_E} : (\Lambda^*)^{-1}(q_E) \rightarrow |\Oo_{\PP_2}^*(2)|\simeq \PP_5.$$
Clearly, $C(E)\not= C_{H_E}^*$.

Let us fix a general hyperplane $H$ of $\PP_3$. For a 0-cycle $Z$ with length 3 contained in $C_H\simeq \PP_1$, we can consider an extension space $\PP (Z):=\PP \Ext^1 (I_Z(1,1), \Oo_Q)\simeq \PP_2$. Note that the Hilbert scheme parametrizing 0-cycles on $C_H$ with length 3, $\PP_1^{[3]}$, is isomorphic to $\PP_3$. Let us define
$$\Uu := R^1{p_1}_* (\Ii \otimes {p_2}^*\Oo_Q(-1,-1)),$$
where $p_1, p_2$ are the projections from $\PP_3 \times Q$ to each factors and $\Ii$ is the universal ideal sheaf of $\PP_3 \times Q$. We can easily find that $\Uu$ is a vector bundle on $\PP_3$ of rank 3 and the fibre of $\PP (\Uu^*)$ at $Z\in \PP_3$ is $\PP (Z)$. Then we have a rational map from $\PP (\Uu^*)$ to $\Mm (3)_q:=(\Lambda^*)^{-1}(q)$, and eventually to $\PP_5$ after the composition with $\pi_q$, where $q$ corresponds to $H$. In particular, the dimension of the image of $\PP(\Uu^*)$ is less than 5 since the dimension of $\Mm(3)_q$ is 4.
\begin{equation}
\xymatrix{ \PP (\Uu^*)\ar[rd] \ar[rr] &&\PP_5 \\
&\Mm(3)_q\ar[ru]}
\end{equation}
For a general 0-cycle $Z=\{ z_1, z_2, z_3\}$ on $C_H$, let $p_{ij}\in \PP_2^*$ be the point corresponding to the line containing $z_i$ and $z_j$. The conic $C(E)$ contains $p_{ij}$ and so the image of $\PP (Z)$ is contained in the projective plane in $\PP_5$ parametrizing all the conics passing through three points $p_{ij}$. Let $Z^*=\{ z_1^*, z_2^*, z_3^*\}$ be the dual lines on $\PP_2^*$ of $Z$, then $p_{ij}$ is the intersection point of $z_i^*$ and $z_j^*$. If we choose linear forms $0\not= Z_i\in H^0(\Oo_{\PP_2^*}(1))$ which vanish on $z_i^*$, then from the previous statement, $\pi_q \circ S$ is defined by
\begin{align*}
\pi_q \circ S : \PP (Z) &\rightarrow |\Oo_{\PP_2^*}(2)|\\
(c_1,c_2,c_3) &\mapsto f_1Z_2Z_3+f_2Z_1Z_3+f_3Z_1Z_2,
\end{align*}
where $(c_1,c_2,c_3)$ is the coordinates from the identification of $\PP(Z)$ with $\PP H^0(\Oo_Z)^*$ and $f_i$'s are homogeneous polynomials of $c_j$'s.

\begin{proposition}
For a general 0-cycle $Z$, the map $\pi_q\circ S$ from $\PP (Z)$ to $\PP_5$ sending $E$ to $\pi_q(S(E))$, is a linear embedding.
\end{proposition}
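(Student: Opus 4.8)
The plan is to record the image conic in the natural basis of the net of conics through $p_{12},p_{13},p_{23}$ and to show that in these coordinates the assignment $(c_1,c_2,c_3)\mapsto(f_1,f_2,f_3)$ is projectively linear and injective. First I would note that, since $C(E)=\pi_q(S(E))$ passes through $p_{12},p_{13},p_{23}$, the image of $\pi_q\circ S$ lies in the net $N$ of conics through these three points. For a general $0$-cycle $Z$ the three lines $z_1^*,z_2^*,z_3^*$ are in general position, so the three line-pairs $Z_2Z_3$, $Z_1Z_3$, $Z_1Z_2$ are linearly independent in $H^0(\Oo_{\PP_2^*}(2))$ and form a basis of $N\simeq\PP_2\subset\PP_5$. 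Thus $(f_1:f_2:f_3)$ are genuine projective coordinates on $N$, and the statement reduces to proving that each $f_i$ is a linear form in the $c_j$ with invertible $3\times 3$ coefficient matrix; composing the induced projective-linear map $\PP(Z)\to N$ with the linear inclusion $N\hookrightarrow\PP_5$ then gives a linear embedding.

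Next I would pin down the three coordinate divisors of the map. For $\{i,j,k\}=\{1,2,3\}$ one has $f_jZ_iZ_k+f_kZ_iZ_j=Z_i(f_jZ_k+f_kZ_j)$, so the conic $f_1Z_2Z_3+f_2Z_1Z_3+f_3Z_1Z_2$ contains the line $z_i^*=\{Z_i=0\}$ exactly when $f_i=0$. Hence $\{f_i=0\}\subset\PP(Z)$ is the locus of those $E$ for which $z_i^*$ is a component of $C(E)$, and I would identify this locus with the coordinate hyperplane $\{c_i=0\}$. Recall that $c_i=0$ means precisely that $E$ is not locally free at $z_i$. The line $z_i^*$ parametrizes (after projection from $q$) the conics $C_{H'}$ through $z_i$; when $E$ is non-locally-free at $z_i$, the restriction $E|_{C_{H'}}$ acquires a section for every such $H'$, so $h^0(E|_{C_{H'}})\neq 0$ and the whole line lies in $S(E)$, i.e. $z_i^*\subset C(E)$; conversely for $E$ locally free at $z_i$ one checks $z_i^*$ is not a component. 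This yields the set-theoretic equality $\{f_i=0\}=\{c_i=0\}$.

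Granting that this vanishing has order exactly one along each $\{c_i=0\}$ (discussed below), the equality upgrades to $f_i=\lambda_i c_i$ with $\lambda_i\neq 0$, since $c_i$ is irreducible and the $f_i$ are homogeneous of a common degree. The map then reads $(c_1:c_2:c_3)\mapsto\lambda_1 c_1\,Z_2Z_3+\lambda_2 c_2\,Z_1Z_3+\lambda_3 c_3\,Z_1Z_2$, whose coefficient matrix in the basis of line-pairs is diagonal with nonzero entries; together with the linear independence of the three line-pairs this displays $\pi_q\circ S|_{\PP(Z)}$ as the composite of a projective-linear isomorphism $\PP(Z)\simeq N$ with the linear inclusion $N\hookrightarrow\PP_5$, hence a linear embedding. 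Base-point-freeness, used to justify that the $f_i$ have a common degree and no common zero, follows from the fact that $S$ extends as a morphism over the boundary of $\PP(Z)$ through the sheaf $R^1{\pi_2}_*\pi_1^*E$, which remains defined for the non-locally-free members.

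The hard part will be exactly the order-one claim. Set-theoretic equality alone only gives $f_i=\mu_i c_i^{m_i}$, and base-point-freeness does not pin down $m_i$, so I must show $m_i=1$. Concretely this amounts to proving that the restriction of the map to a boundary line $\{c_i=0\}$ — where $E$ is non-locally-free at $z_i$ and $C(E)=z_i^*\cup\ell$ with $\ell$ in the pencil of lines through $p_{jk}$ — is a degree-one parametrization of that pencil by $[c_j:c_k]$, rather than a degree-$m_i$ cover. This requires a local analysis of $E$ at $z_i$ as the extension class degenerates, fed into the splitting-type criterion $h^0(E|_{C_{H'}})\neq 0$; it is the only place where the genuinely linear, as opposed to merely monomial, nature of the map is forced.
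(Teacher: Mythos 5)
Your reduction to showing $f_i=\lambda_i c_i$ is sound as a strategy, and your identification of the coordinate divisors $\{f_i=0\}=\{c_i=0\}$ via local freeness at $z_i$ is a reasonable set-theoretic observation. But as you yourself concede in the last paragraph, the entire content of the proposition is concentrated in the multiplicity-one claim $m_i=1$: set-theoretic equality of divisors only yields $f_i=\mu_i c_i^{m}$ for a common degree $m$, and for $m\geq 2$ the map would be the $m$-th power map $(c_1:c_2:c_3)\mapsto(c_1^m:c_2^m:c_3^m)$ followed by a linear embedding of the net --- not an embedding at all. You name the step (``a local analysis of $E$ at $z_i$ as the extension class degenerates'') but do not carry it out, and it is not a routine verification that can be waved through; it is precisely where linearity, as opposed to monomiality, must be forced. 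So the proposal has a genuine gap.

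The paper avoids this issue entirely by never passing through the divisorial description. It constructs a map $\pi:\Ext^1(I_Z(1,1),\Oo_Q)\rightarrow H^0(\Oo_{\PP_3^*}(2))$ cohomologically: an extension class $\epsilon$ is pulled back to an extension on the incidence variety $\mathbf{I}\subset Q\times\PP_3^*$, and $\pi(\epsilon)$ is defined as the image of $1$ under the connecting map $H^0(\pi_1^*I_Z((0,0),3))\rightarrow H^1(\Oo_{\mathbf{I}}((-1,-1),3))\simeq H^0(\Oo_{\PP_3^*}(2))$. This $\pi$ is linear in $\epsilon$ by construction, and its value on the coordinate axis $H^0(\Oo_{z_i})^*\subset\Ext^1(I_Z(1,1),\Oo_Q)$ is computed explicitly to be multiplication by $(Z_1Z_2Z_3)/Z_i$, giving $\pi(c_1,c_2,c_3)=c_1Z_2Z_3+c_2Z_1Z_3+c_3Z_1Z_2$ on the nose. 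The identification of $\{\pi(\epsilon)=0\}$ with $S(E)$ then comes from taking direct images of the extension on $\mathbf{I}$ and recognizing $R^1{\pi_2}_*\pi_1^*E((0,0),3)\simeq\theta_E(4)$, whose support is $S(E)$, together with equality of degrees. If you want to salvage your route, you would need to supply the degree-one computation on a boundary line $\{c_i=0\}$; otherwise the cohomological construction is the efficient way to get linearity for free.
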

\begin{proof}
From the previous argument, it is enough to check that $f_i$'s are linearly independent linear polynomials. In fact we can prove that $f_i\equiv c_i$ for all $i$.

Recall that $\mathbf{I}$ is the incidence variety in $Q \times \PP_3^*$ with the projections $\pi_1$ and $\pi_2$. Then we have an isomorphism,
$$h: \Oo_{\PP_3^*} \rightarrow {\pi_2}_*{\pi_1^*}I_Z((0,0), 3),$$
given by the multiplication with $Z_1Z_2Z_3$. Here, $\Oo_{\mathbf{I}}((a,b),c)$ is the sheaf $\pi_1^*\Oo_Q(a,b) \otimes \pi_2^*\Oo_{\PP_3^*}(c)$ on $\mathbf{I}$. Note that ${\pi_2}_*{\pi_1^*}I_Z$ is the ideal sheaf of functions on $\PP_3^*$, vanishing on the lines $z_i^*$. From the canonical homomorphisms,
\begin{align*}
\Ext^1 (I_Z(1,1), \Oo_Q) &\rightarrow \Ext^1 (\pi_1^*I_Z(1,1), \Oo_{\mathbf{I}})\\
&\rightarrow \Ext^1 (\pi_1^*I_Z((0,0), 3), \Oo_{\mathbf{I}}((-1,-1),3)),
\end{align*}
we can assign to an element $\epsilon \in \Ext^1 (I_Z(1,1), \Oo_Q)$, an extension
\begin{equation}\label{ep}
0\rightarrow \Oo_{\mathbf{I}}((-1,-1),3) \rightarrow \pi_1^*E((0,0),3) \rightarrow \pi_1^*I_Z((0,0), 3) \rightarrow 0.
\end{equation}
From the long exact sequence of cohomology of (\ref{ep}), we obtain
$$H^0(\Oo_{\PP_3}^*) \rightarrow H^0(\pi_1^*I_Z((0,0), 3)) \rightarrow H^1(\Oo_{\mathbf{I}}((-1,-1),3)) \simeq H^0(\Oo_{\PP_3^*}(2)),$$
and let $\pi(\epsilon)$ be the image of $1\in H^0(\Oo_{\PP_3^*})$. Then we can define a homomorphism
\begin{equation}
\pi : \Ext^1 (I_Z(1,1), \Oo_Q) \rightarrow H^0(\Oo_{\PP_3^*}(2)),
\end{equation}
by sending $\epsilon$ to $\pi(\epsilon)$.

From the inclusion $I_{z_i} \hookrightarrow I_Z$, we have a natural injection from
$$\Ext^1 (I_{z_i}(1,1), \Oo_Q)\simeq \CC \hookrightarrow \Ext^1(I_Z(1,1), \Oo_Q)$$
whose image is $H^0(\Oo_{z_i})^*$. It can be easily checked that any element in the image is mapped to $H^0(\Oo_{\PP_3^*}(2))$ by the multiplication with $(Z_1Z_2Z_3)/Z_i$. Thus $\pi$ is defined by sending $(c_1,c_2,c_3)$ to $c_1Z_2Z_3+c_2Z_1Z_3+c_3Z_1Z_2$.

When we take the direct image of (\ref{ep}), then we obtain
\begin{align*}
{\pi_2}_*\pi_1^*E((0,0),3) &\rightarrow {\pi_2}_*\pi_1^*I_Z((0,0), 3) \rightarrow R^1 {\pi_2}_* \Oo_{\mathbf{I}}((-1,-1),3)\\
&\rightarrow  R^1{\pi_2}_*\pi_1^*E((0,0),3)  \rightarrow R^1{\pi_2}_*\pi_1^*I_Z((0,0), 3) \rightarrow 0.
\end{align*}
Note that ${\pi_2}_*\pi_1^*I_Z((0,0), 3) \simeq \Oo_{\PP_3^*}$, $R^1 {\pi_2}_* \Oo_{\mathbf{I}}((-1,-1),3) \simeq \Oo_{\PP_3^*}(2)$ and the second map in the sequence above, is given by the multiplication with $\pi( \epsilon)$. As an analogue of the result in \cite{Hulek}, we can easily check that $R^1{\pi_2}_*\pi_1^*E((0,0),3)$ is isomorphic to $\theta_E(4)$ and its support is $S(E)$. On the other hand, the support of $R^1{\pi_2}_*\pi_1^*I_Z((0,0), 3)$ is contained in $\{p_{ij}\}$ and thus the support of $S(E)$ is same as the support of $\{\pi(\epsilon)=0\}$. Because of the same degree, they are the same.
\end{proof}
\begin{remark}
Using the argument as in the similar statement on the projective plane in \cite{Hulsbergen}, we can prove that a sheaf $E\in \PP (Z)$ with the coordinates $(c_1,c_2,c_3)$ is locally free if and only if $c_i\not= 0$ for all $i$. Thus from the proof of the preceding proposition, we can observe that the conic corresponding to the image of $E$ is smooth if and only if $E$ is locally free. Note that the secant variety $V_3$ of the Veronese surface in $\PP_5$ is a cubic hypersurface. The intersection of the image of $\PP (Z)$ with $V_3$ are the 3 lines, which are the image of non-locally free sheaves in $\PP (Z)$.
\end{remark}

We can see that the same statement holds for arbitrary hyperplane section $H\in \PP_3^*$. If $H\in Q^*$, $Q^*$ the dual conic of $Q$, then $C_H=l_1\cup l_2$. Because of the stability condition, our 0-cycles of length 3 associated to $E$ with $\Lambda^*(E)\in Q^*$ cannot have its support only on $l_1$ nor $l_2$. So the family of 0-cycles we consider, is isomorphic to the two copies of $\PP_1^{[2]}\times \PP_1$. Let us denote
$$\Mm(3) = \Mm^0(3)\coprod \Mm^{1}(3) \coprod \Mm^{2}(3),$$
where $\Mm^0(3)=(\Lambda^*)^{-1}(\PP_3^* \backslash Q^*)$ and $\Mm^{i}(3)$'s are the two irreducible components of $(\Lambda^*)^{-1}(Q^*)$ whose 0-cycles have two points of its support on the ruling equivalent to $l_i$.

First let us assume that $H\not \in Q^*$. Let $V\subset \PP_5$ be the image of $\PP (\Uu^*)$ and $v\in V$ be a general point in $V$. Then there exists three points $z_i$'s on $C_H$ and $c_i$'s for which we have $v=c_1Z_1+c_2Z_2+c_3Z_3$. Since $z_i\in C_H$, the lines $Z_i$'s are tangent to the dual conic $C_H^*$, i.e. $Z_i$'s is a circumscribed triangle around $C_H^*$. Note that $Z_i$'s is a inscribed triangle in $v$. Thus $V$ is the closure of the family of conics Poncelet related to $C_H^*$ (see section 2 in \cite{D}). From the classical result, $V$ is a hypersurface in $\PP_5$ and the generic fibre of the map $\PP (\Uu^*) \rightarrow V$ is isomorphic to $\PP_1$. In fact, from the remark (2.2.3) in \cite{D}, $V$ is isomorphic to a hypersurface of degree 4, $H_4$ in the space of conics, given by the condition $c_2^2-c_1c_3=0$, where 
$$\det (A-tI_3)=(-t)^3+c_1(-t)^2+c_2(-t)+c_3,$$
is the characteristic polynomial of a symmetric matrix $A$ defining a conic. 

Let $E\in \Mm^1(3)$ and so $H\in Q^*$. If we define $V$ as before and let $v\in V$ be a general conic, then $v$ pass through the dual point $p_1\in \PP_2$ of $l_1$. Let us fix a conic $v$ passing through $p_1$. If we choose $q_1 \in v$ not equal to $p_1$, then consider a line $l$ passing through $q_1$ and the dual point $p_2$ of $l_2$. Let $q_2$ be the other intersection point of $l$ with $v$. Then the dual points corresponding to the lines $\overline{p_1q_1}, \overline{q_1q_2}, \overline{q_2p_1}$ is a 0-cycle $Z$ mapping to $v$. It depends on the choice of $q_1$. Thus, $V$ is isomorphic to a hyperplane in $\PP_5$ and the generic fibre of the map from $\PP (\Uu^*)$ is again isomorphic to $\PP_1$. We have the same argument for $\Mm^2 (3)$.

As a direct consequence, $\Mm(3)_q$ is isomorphic to an open Zariski subset of a hyperplane in $\PP_5$ and thus we have the following proposition.
\begin{proposition}
$\Mm(3)$ admits a fibration over $\PP_3^*$ whose fibre over $H\in \PP_3^*$ is isomorphic to  
\begin{enumerate}
\item an open Zariski subset $H_4 \cap (\PP_5\backslash V_3)$ of a $H_4$, where $V_3$ is the secant variety of the Veronese surface $S\subset \PP_5$ and $H_4$ is a hypersurface of degree 4 consisting of conics Poncelet related to $Q\cap H$, if $H\in \PP_3^*\backslash Q^*$;
\item the union of two varieties $H_i \cap (\PP_5 \backslash V_3)$, $i=1,2$, where $H_i$ is the hyperplane in the space of conics which pass through a point $p_i$ dual to the line $l_i\subset H$, where $Q\cap H=l_1+l_2$, if $H\in Q^*$. 
\end{enumerate}
\end{proposition}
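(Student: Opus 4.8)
The plan is to realise the fibration as the surjection $\Lambda^* : \Mm(3) \to \PP_3^*$ constructed above, whose generic fibre is already known to be four-dimensional, and then to identify each fibre $\Mm(3)_q := (\Lambda^*)^{-1}(q)$ with the advertised open subset of $\PP_5$. First I would record that every $E$ in $\Mm(3)_q$ fits into an extension (\ref{ext}) with a length-$3$ cycle $Z$ supported on the conic $C_H$ attached to $q$, so that $\Mm(3)_q$ is the locus swept out by the rational map $\PP(\Uu^*) \to \Mm(3)_q$ and its image under $\pi_q \circ S$ is the variety $V$ already analysed. By the preceding proposition $\pi_q \circ S$ is a linear embedding on each $\PP(Z)$, and since the jumping-conic hypersurface determines $E$ --- so that $S$ is injective on $\Mm(3)$ --- the induced map $\Mm(3)_q \to V$ is injective.

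Then I would deduce the identification from a dimension count together with the description of $V$ established above. Since $\Mm(3)_q$ is four-dimensional and $V$ is four-dimensional --- a hypersurface when $H \notin Q^*$, a reducible hyperplane when $H \in Q^*$ --- the injective map $\Mm(3)_q \to V$ is an isomorphism onto a Zariski-open subset, and the relevant open locus is cut out by local freeness. By the remark following the preceding proposition, the sheaf with coordinates $(c_1, c_2, c_3)$ is locally free precisely when all $c_i \neq 0$, equivalently when the corresponding conic is smooth, equivalently when the associated point of $\PP_5$ lies off the secant variety $V_3$ of the Veronese surface. Hence $\Mm(3)_q \cong V \cap (\PP_5 \backslash V_3)$.

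Next I would insert the geometric description of $V$ in each case. When $H \notin Q^*$ the conic $C_H$ is smooth, and the tangent lines $Z_i^*$ of $C_H^*$ inscribing a variable conic $v$ form exactly a Poncelet configuration; the classical computation quoted from \cite{D} then identifies $V$ with the quartic $H_4$ defined by $c_2^2 - c_1 c_3 = 0$, which yields case (1). When $H \in Q^*$ one has $C_H = l_1 \cup l_2$, and for each ruling the requirement that the cycle pass through the dual point $p_i$ of $l_i$ linearises the condition defining $V$, producing the two hyperplanes $H_1$ and $H_2$ and the corresponding components $\Mm^1(3)$ and $\Mm^2(3)$; this yields case (2). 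Substituting these descriptions into $\Mm(3)_q \cong V \cap (\PP_5 \backslash V_3)$ then gives both parts of the statement.

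I expect the main obstacle to be the step asserting that $\Mm(3)_q$ is globally, and not merely fibrewise over each $\PP(Z)$, an open subset of $V$: this requires the rational map $\PP(\Uu^*) \to \Mm(3)_q$ to be dominant with generic fibre the pencil $\PP H^0(E(1,1)) \cong \PP_1$, and it requires injectivity of $S$ together with the equality of dimensions to upgrade the set-theoretic bijection onto $V \cap (\PP_5 \backslash V_3)$ to an isomorphism of varieties. The Poncelet identification, by contrast, is classical and can simply be cited from \cite{D}.
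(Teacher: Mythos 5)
Your overall route is the paper's: realise the fibration as $\Lambda^*$, identify the fibre $\Mm(3)_q=(\Lambda^*)^{-1}(q)$ with the image $V$ of $\PP(\Uu^*)$ under $\pi_q\circ S$, quote Dolgachev for the identification of $V$ with the Poncelet quartic $H_4$ (resp.\ with the two hyperplanes $H_1\cup H_2$ when $C_H=l_1+l_2$), and cut out the locally free locus as the complement of the secant variety $V_3$ of the Veronese surface. The step that does not work as written is your injectivity argument: you get injectivity of $\Mm(3)_q\to V$ from the assertion that ``the jumping-conic hypersurface determines $E$, so that $S$ is injective on $\Mm(3)$.'' In the paper that assertion is the \emph{final} theorem of the section and is deduced \emph{from} the present proposition (``In particular, the map $S:\Mm(k)\to\PP_N$ is an isomorphism to its image, when $k\le 3$''), so invoking it here is circular. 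The paper's actual mechanism is a fibre count on the $5$-dimensional variety $\PP(\Uu^*)$: it surjects onto the $4$-dimensional $\Mm(3)_q$ with fibre the pencil $\PP H^0(E(1,1))\simeq\PP_1$, and it maps onto the $4$-dimensional $V$ with generic fibre again $\PP_1$ --- the closure of the fibre over a generic Poncelet conic $v$ is the pencil of triangles inscribed in $v$ and circumscribed about $C_H^*$, and the paper's closing Remark verifies that this fibre has a unique component and that at most one point of each $\PP(Z)$ maps to $v$. Since $\PP(\Uu^*)\to V$ factors through $\Mm(3)_q$, the two pencils coincide and $\Mm(3)_q\to V$ is generically injective; this is the non-circular substitute you need for your appeal to the injectivity of $S$. (You do gesture at the pencil $\PP H^0(E(1,1))\simeq\PP_1$ in your final paragraph, but in the body of the argument the load is carried by the circular step.)

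A secondary caution: an injective morphism between $4$-dimensional varieties over $\CC$ is not automatically an isomorphism onto a Zariski-open subset (normalisation of a non-normal variety is a standard counterexample), so ``injective plus equal dimensions'' does not by itself upgrade the bijection to an isomorphism; one needs either Zariski's main theorem on a normal target or, as the paper effectively uses, the explicit linear parametrisation $(c_1,c_2,c_3)\mapsto c_1Z_2Z_3+c_2Z_1Z_3+c_3Z_1Z_2$ of $\pi_q\circ S$ on each $\PP(Z)$. The remaining ingredients of your outline --- the Poncelet citation, the criterion that $E$ is locally free iff all $c_i\neq 0$ iff the image conic is smooth iff the image point avoids $V_3$, and the splitting into the two components $\Mm^1(3)$, $\Mm^2(3)$ over $Q^*$ --- match the paper.
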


\begin{remark}
In fact, we can obtain differently the old result of Darboux on the Poncelet related conics in the case of triangles. We know that we have $\dim V \leq \dim \Mm(3)_q=4$. Assume that $C_H$ is a smooth conic. Let $\triangle_2$ be the subscheme of $C_H^{[3]}$ whose points are 0-cycles with at most 2 points as their supports. Similarly, we can define $\triangle_3 \subset \triangle_2$. Let $Z\in \triangle_2$, say $Z=\{x,x,y\}$. The map $\PP (Z) \rightarrow \PP_5$ is naturally defined by sending $(c_1, c_2, c_3)$ to $(c_1+c_2)XY+c_3X^2$. From this observation, the image of $\PP_2$-bundle over $\delta_3$ is $C_H\subset \PP_5$ mapped by $|\Oo_{C_H}(2)|$. For $Z=\{x,x,y\}$, $\PP(Z)$ is mapped to the line passing through $X^2$ and $XY$. When $Y$ is moving along $C_H$, this line covers a projective plane $\PP_2(x)$ passing through the point $X^2\in C_H\subset \PP_5$. Let $D$ be the union of such projective planes over $x$ moving along $C_H$. In particular, $D$ is a subvariety of $V$ with dimension 3 and all the non-locally free sheaves in $\PP (\Uu^*)$ map to $D$. Also we have
$$V_3\cap V =D,$$
where $V_3$ is the secant variety of the Veronese surface in $\PP_5$. It also implies that $V$ is a subvariety of $\PP_5$ with dimension 4.

Let us consider a fibre of the map $\PP (\Uu^*) \rightarrow V$ over $XY$ with $x,y\in C_H$. The image of the closure of this fibre via the projection to $C_H^{[3]}$ is isomorphic to $\PP_1$, parametrizing 0-cycles whose supports contain $x$ and $y$. In fact, there exists a unique component of the closure of the fibre, mapping to $\PP_1\subset C_H^{[3]}$. It implies that the closure of the fibre over a generic conic $v$ in $V$ is isomorphic to $\PP_1$ since there exists at most 1 point in $\PP (Z)$ that maps to $v$.

\end{remark}

In particular, the map $S: \Mm(k) \rightarrow \PP_N$ is an isomorphism to its image, when $k\leq 3$.
\begin{theorem}
The set of jumping conics of $E\in \Mm(k)$, determines $E$ uniquely when $k\leq 3$.
\end{theorem}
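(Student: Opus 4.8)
The plan is to show that the morphism $S:\Mm(k)\rightarrow \PP_N$ is injective for $k\leq 3$; since $k=1$ yields no stable bundle, only $k=2$ and $k=3$ require argument. For $k=2$ the claim is immediate from what we have already recorded: $S$ extends to the isomorphism $\overline{S}:\overline{\Mm}(2)\rightarrow \PP_3$, so in particular $S(E)$ recovers $E$.

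For $k=3$ the first step is to observe that a quadric cone is reconstructed from the pair consisting of its vertex and its projection from that vertex. Concretely, $S(E)$ determines $q_E=\Lambda^*(E)$ as its unique vertex, and it determines the smooth conic $C(E)=\pi_{q_E}(S(E))\subset \PP_2^*$; conversely $S(E)$ is the cone over $C(E)$ with vertex $q_E$. Hence $S$ is injective precisely when the assignment $E\mapsto(q_E,C(E))$ is injective. If $S(E)=S(E')$ then $q_E=q_{E'}=:q$, so $E$ and $E'$ lie in the common fibre $\Mm(3)_q=(\Lambda^*)^{-1}(q)$, and the problem reduces to injectivity of $E\mapsto C(E)$ on each such fibre.

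To settle this I would use the two projections out of the $\PP_2$-bundle $\PP(\Uu^*)\rightarrow C_H^{[3]}\simeq\PP_3$. The first is the rational map $\rho:\PP(\Uu^*)\dashrightarrow\Mm(3)_q$ whose fibre over $E$ is $\PP H^0(E(1,1))\simeq\PP_1$, the pencil of sections used to build the extension (\ref{ext}); the second is $\pi_q\circ S:\PP(\Uu^*)\rightarrow V\subset\PP_5$, whose generic fibre is the Poncelet $\PP_1$ identified in the fibration proposition. The key point is that $C(E)=\pi_{q}(S(E))$ is intrinsic to $E$ and independent of the chosen section, so the entire pencil of sections over a fixed $E$ is carried by $\pi_q\circ S$ to the single conic $C(E)$; thus each $\rho$-fibre is contained in a fibre of $\pi_q\circ S$. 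Both being irreducible curves of the same dimension, they coincide generically, and $E$ is recovered from $C(E)$ as the unique bundle whose section-pencil is this Poncelet $\PP_1$. This makes $E\mapsto C(E)$ a bijection onto the open subset $H_4\cap(\PP_5\setminus V_3)$ of conics Poncelet related to $Q\cap H$ (respectively onto the union of the two pieces $H_i\cap(\PP_5\setminus V_3)$ when $H\in Q^*$), exactly as asserted in the fibration proposition, and therefore injective.

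Combining the two recoveries, namely $q_E$ from the vertex and $E$ from $C(E)$ within $\Mm(3)_q$, gives injectivity of $S$ on $\Mm(3)$ and completes the case $k=3$. I expect the main obstacle to be the verification off the generic locus: one must check that distinct bundles cannot share a conic when the underlying $0$-cycle degenerates, that is along the locus mapping into $V_3$ corresponding to non locally free sheaves, and when $H\in Q^*$, so that the generic coincidence of the two $\PP_1$-fibres persists and the two open pieces are glued correctly across the components $\Mm^1(3)$ and $\Mm^2(3)$.
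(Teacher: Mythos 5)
Your proposal is correct and follows essentially the same route as the paper, which derives the theorem from the preceding results exactly as you do: the $k=2$ case via the isomorphism $\overline{S}$, and the $k=3$ case by recovering $q_E$ as the vertex of the cone $S(E)$ and then identifying $\Mm(3)_q$ with an open subset of the Poncelet hypersurface via $E\mapsto C(E)$, using the coincidence of the section-pencil $\PP H^0(E(1,1))\simeq \PP_1$ with the generic $\PP_1$-fibre of $\PP(\Uu^*)\to V$. The residual issue you flag (behaviour over the non-generic locus and over $Q^*$) is likewise left implicit in the paper.
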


\providecommand{\bysame}{\leavevmode\hbox to3em{\hrulefill}\thinspace}
\providecommand{\MR}{\relax\ifhmode\unskip\space\fi MR }
\providecommand{\MRhref}[2]{%
  \href{http://www.ams.org/mathscinet-getitem?mr=#1}{#2}
}
\providecommand{\href}[2]{#2}


\begin{thebibliography}{10}

\bibitem{Barth2}
W.~Barth, \emph{Moduli of vector bundles on the projective plane}, Invent.
  Math. \textbf{42} (1977), 63--91. \MR{0460330 (57 \#324)}

\bibitem{Barth}
\bysame, \emph{Some properties of stable rank-{$2$} vector bundles on {${\bf
  P}\sb{n}$}}, Math. Ann. \textbf{226} (1977), no.~2, 125--150. \MR{0429896
  (55 \#2905)}

\bibitem{Buch}
N.~P. Buchdahl, \emph{Stable {$2$}-bundles on {H}irzebruch surfaces}, Math. Z.
  \textbf{194} (1987), no.~1, 143--152. \MR{871226 (88c:14022)}

\bibitem{C}
Fabrizio Catanese, \emph{Footnotes to a theorem of {I}. {R}eider}, Algebraic
  geometry ({L}'{A}quila, 1988), Lecture Notes in Math., vol. 1417, Springer,
  Berlin, 1990, pp.~67--74. \MR{1040551 (91d:14022)}

\bibitem{D}
Igor Dolgachev, \emph{Topics in classical algebraic geometry - part 1}, Lecture
  Notes, http://www.math.lsa.umich.edu/~idolga/topics1.pdf.

\bibitem{Gieseker}
D.~Gieseker, \emph{On the moduli of vector bundles on an algebraic surface},
  Ann. of Math. (2) \textbf{106} (1977), no.~1, 45--60. \MR{466475
  (81h:14014)}

\bibitem{Harris}
Joe Harris, \emph{Algebraic geometry}, Graduate Texts in Mathematics, vol. 133,
  Springer-Verlag, New York, 1995, A first course, Corrected reprint of the
  1992 original. \MR{1416564 (97e:14001)}

\bibitem{Huh}
S.~Huh, \emph{A moduli space of stable sheaves on a smooth quadric in
  $\mathbb{P}_3$}, Preprint, arXiv:0810.4392 [math.AG], 2008.

\bibitem{Hulek}
Klaus Hulek, \emph{Stable rank-{$2$} vector bundles on {${\bf P}\sb{2}$} with
  {$c\sb{1}$}\ odd}, Math. Ann. \textbf{242} (1979), no.~3, 241--266.
  \MR{545217 (80m:14011)}

\bibitem{Hulsbergen}
W.~Hulsbergen, \emph{Vector bundles on the projective plane}, Ph.D. thesis,
  Leiden, 1976.

\bibitem{IM}
M.~Id{\`a} and M.~Manaresi, \emph{Rank two vector bundles on {${\bf P}\sp n$}
  uniform with respect to some rational curves}, Arch. Math. (Basel)
  \textbf{51} (1988), no.~3, 266--273. \MR{960405 (89i:14011)}

\bibitem{Maruyama}
Masaki Maruyama, \emph{The theorem of {G}rauert-{M}\"ulich-{S}pindler}, Math.
  Ann. \textbf{255} (1981), no.~3, 317--333. \MR{615853 (82k:14012)}

\bibitem{OSS}
Christian Okonek, Michael Schneider, and Heinz Spindler, \emph{Vector bundles
  on complex projective spaces}, Progress in Mathematics, vol.~3, Birkh\"auser
  Boston, Mass., 1980. \MR{561910 (81b:14001)}

\bibitem{OPP}
W.~M. Oxbury, C.~Pauly, and E.~Previato, \emph{Subvarieties of {$\rm{SU}\sb
  C(2)$} and {$2\theta$}-divisors in the {J}acobian}, Trans. Amer. Math. Soc.
  \textbf{350} (1998), no.~9, 3587--3614. \MR{1467474 (98m:14034)}

\bibitem{Vitter}
Al~Vitter, \emph{Restricting semistable bundles on the projective plane to
  conics}, Manuscripta Math. \textbf{114} (2004), no.~3, 361--383.
  \MR{2076453 (2005e:14066)}

\end{thebibliography}
\end{document}